\newif\ifsimax
\newif\ifjcam
\DeclareMathOperator{\diag}{diag}
\DeclareMathOperator{\rank}{rank}
\DeclareMathOperator{\Span}{span}
\DeclareMathOperator{\trace}{trace}
\newcommand*{\set}[1]{\lbrace#1\rbrace}
\newcommand*{\SET}[2]{\left\lbrace#1\colon #2\right\rbrace}
\newcommand*{\trans}{^{\top}}
\newcommand*{\herm}{^*}
\newcommand{\Cases}[1]{\begin{cases}#1\end{cases}}
\newcommand{\bmat}[1]{\begin{bmatrix}#1\end{bmatrix}}
\newcommand*{\md}{\mathop{}\mathopen{}\mathrm{d}}
\newcommand*{\mi}{\mathrm i}
\def\adots{\mathinner{\mkern2mu\raise1pt\hbox{.}\mkern2mu
    \raise4pt\hbox{.}\mkern2mu\raise7pt\hbox{.}\mkern1mu}}
\newcommand*{\Expect}{\mathbb E}
\newcommand*{\new}{\mathrm{new}}
\newcommand*{\proj}{\mathrm{p}}
\newcommand*{\RR}{\mathrm{RR}}
\newcommand*{\Inner}{\mathrm{in}}
\newcommand*{\Outer}{\mathrm{out}}
\newcommand*{\NULL}{\mathrm{null}}
\DeclareMathOperator{\nnz}{nnz}
\newcommand*{\kSVD}{k_{\mathrm{SVD}}}
\newcommand*{\tildekSVD}{\tilde k_{\mathrm{SVD}}}
\newcommand*{\kGSVD}{k_{\mathrm{GSVD}}}
\newcommand*{\tildekGSVD}{\tilde k_{\mathrm{GSVD}}}
\newcommand*{\tol}{\mathtt{tol}}
\newcommand*{\macheps}{\bm u}
\DeclareMathOperator{\spec}{spec}
\definecolor{bkgndcolor}{rgb}{1,1,1}
\newenvironment{keywords}{\medskip\textbf{Keywords:}}{}
\newenvironment{MSCcodes}{\medskip\textbf{AMS subject classifications (2020).}}{}
\newtheorem{theorem}{Theorem}
\newtheorem{lemma}[theorem]{Lemma}
\title{A Contour Integral-Based Algorithm for Computing Generalized Singular
Values}
\author[1]{Yuqi Liu}
\author[2]{Xinyu Shan}
\author[2,3]{Meiyue Shao}
\affil[1]{School of Mathematical Sciences, Fudan University, Shanghai 200433,
China}
\affil[2]{School of Data Science, Fudan University, Shanghai 200433, China}
\affil[3]{MOE Key Laboratory for Computational Physical Sciences, Fudan
University, Shanghai 200433, China}
\date{\today}
\begin{document}
\pagecolor{bkgndcolor}

\maketitle

\begin{abstract}
We propose a contour integral-based algorithm for computing a few singular
values of a matrix or a few generalized singular values of a matrix pair.
Mathematically, the generalized singular values of a matrix pair are the
eigenvalues of an equivalent Hermitian--definite matrix pencil, known as the
Jordan--Wielandt matrix pencil.
However, direct application of the FEAST algorithm does not fully exploit the
structure of this problem.
We analyze several projection strategies on the Jordan--Wielandt matrix pencil,
and propose an effective and robust scheme tailored to GSVD.
Both theoretical analysis and numerical experiments demonstrate that our
algorithm achieves rapid convergence and satisfactory accuracy.
\end{abstract}

\begin{keywords}
Generalized singular value decomposition,
contour integration,
FEAST,
Jordan--Wielandt matrix pencil,
subspace iteration
\end{keywords}

\begin{MSCcodes}
15A18, 65F15, 65F50
\end{MSCcodes}

\section{Introduction}
\label{sec:introduction}
The generalized singular value decomposition (GSVD) of a matrix pair
proposed by Van Loan in~\cite{VanLoan1976} is a generalization of the singular
value decomposition (SVD) of a single matrix.
It was further developed by Paige and Saunders~\cite{PS1981} and many
others~\cite{BZ1993,PE1993,VanLoan1985}.
GSVD is closely related to many numerical linear algebra and data science
problems, such as the solution of eigenvalue problems~\cite{Betcke2008,EL1989,
Kagstrom1984,KYW2000}, several variants of least squares
problems\cite{VanLoan1976,Hansen1989,Paige1985}, 
the general Gauss--Markov linear model~\cite{EL1989,Paige1985}, the linear 
discriminant analysis~\cite{PP2005}, information retrieval~\cite{HJP2003}, 
real-time signal processing~\cite{NNI2012,SV1984}, etc.
GSVD has been applied to solve several real world problems, e.g., the
comparative analysis of the DNA microarrays~\cite{ABB2003} and ionospheric
tomography~\cite{BSB2004}.

There exist several numerical algorithms for computing the GSVD for dense
matrices~\cite{VanLoan1976,BD1993,Paige1986}.
Recent development on the stable computation of the CS
decomposition~\cite{Sutton2012} provides another alternative for computing the
GSVD.
As for large and sparse problems, there are also several approaches.
Zha's algorithm~\cite{Zha1996}, which is based on the CS decomposition and the
Lanczos bidiagonalization process, can be used to compute a few extreme
generalized singular values.
Jacobi--Davidson (JD) type algorithms~\cite{Hochstenbach2009,HJ2023} are
capable of computing generalized singular values at an arbitrary location.
In practice JD type algorithms are mostly used to find interior generalized
singular values.
Other popular symmetric eigensolvers, such as the LOBPCG
algorithm~\cite{Knyazev2001} and the ChASE algorithm~\cite{WSD2019}, can also
be adapted to GSVD because GSVD is essentially a symmetric eigenvalue
problem.

Contour integration is a recently developed technique for solving eigenvalue
problems, especially for finding interior eigenvalues.
The Sakurai--Sugiura (SS) method~\cite{SS2003} solves a symmetric 
eigenvalue problem by constructing a small moment matrix.
This method sometimes suffers from numerical instability since the moment
matrix, which is a Hankel matrix, is often ill-conditioned~\cite{SS2007}.
FEAST~\cite{Polizzi2009,TP2014} and CIRR~\cite{SS2007} belong to another
type of contour integral-based eigensolvers, which are essentially subspace
iteration applied to spectral projectors.
The technique of contour integration has been extended to solve other
eigenvalue problems, including non-Hermitian eigenvalue
problems~\cite{YXCCB2017}, polynomial eigenvalue problems~\cite{ASTIK2010},
and more general nonlinear eigenvalue problems~\cite{YS2013}.
Contour integral-based eigensolvers require solving a number of shifted linear
systems with multiple right-hand-sides.
Efficient methods for solving these shifted systems can be found in, e.g.,
\cite{FSFI2013,OKTS2010,SFT2013}.

In this work we discuss how to use contour integration to compute a few
singular values of a matrix \(A\) or a few generalized singular values of a
matrix pair \((A,B)\) within a given interval.
Since SVD and GSVD are special cases of the symmetric eigenvalue problem, a
straightforward approach is to apply the FEAST algorithm to either the Gram
matrix (pencil) or the Jordan--Wielandt matrix (pencil).
However such a naive approach is \emph{not} a good choice as it does not make
use of the special structure of SVD/GSVD.
We solve this problem by designing a structured FEAST algorithm tailored to
the Jordan--Wielandt matrix (pencil) so that it is both faster and more robust
compared to the naive approach.
We shall focus on how to choose an effective and robust projection scheme that
properly handles user-supplied initial guesses.
Without loss of generality, we assume that \(B\) 
has full column rank.
This assumption is usually plausible in practice as long as the null space is
properly deflated, possibly by another extremal eigensolver~\cite{FD2013}.

The rest of this paper is organized as follows.
We first briefly review some basic knowledge about GSVD and the FEAST
algorithm in Section~\ref{sec:preliminary}.
In Section~\ref{sec:algorithm-SVD} we propose a contour integral-based
algorithm for computing partial SVD.
Several projection schemes are discussed in detail.
The algorithm naturally carries over to GSVD, which is discussed in
Section~\ref{sec:algorithm-GSVD}.
In Section~\ref{sec:experiments} we present experimental results to
demonstrate the effectiveness of our algorithm.
The paper is concluded in Section~\ref{sec:conclusions}.

\section{Preliminaries}
\label{sec:preliminary}

\subsection{Generalized singular value decomposition}
\label{subsec:GSVD}
We first briefly review the generalized singular value decomposition of two
matrices \(A\in\mathbb C^{m\times n}\) and \(B\in\mathbb C^{p\times n}\) with \(p\geq n\).
For simplicity, we assume that \(B\) has full column rank, i.e.,
\(\rank(B)=n\).
There exist two unitary matrices
\(U\in\mathbb C^{m\times m}\),
\(V\in\mathbb C^{p\times p}\), and a nonsingular matrix
\(X\in\mathbb C^{n\times n}\) such that
\begin{equation}\label{eq:GSVD}
\begin{aligned}
&U\herm AX=\Sigma_A=
\begin{bNiceMatrix}[first-row,last-col]
q & n-q & \\
C & 0 & ~q \\
0 & 0 & ~m-q \\
\end{bNiceMatrix}\quad,\\
&V\herm BX=\Sigma_B=\begin{bNiceMatrix}[first-row,last-col]
q & n-q & \\
S & 0 & ~q \\
0 & I & ~n-q \\
0 & 0 & ~p-n
\end{bNiceMatrix}\quad,
\end{aligned}
\end{equation}
where 
\(C=\diag\set{\alpha_1,\alpha_2,\dotsc,\alpha_q}\) and
\(S=\diag\set{\beta_1,\beta_2,\dotsc,\beta_q}\) are positive diagonal
matrices satisfying \(C^2+S^2=I_q\).
The decomposition~\eqref{eq:GSVD} is known as the \emph{generalized singular
value decomposition (GSVD)} of \((A,B)\)~\cite{VanLoan1976}.
The formulation used here is a compressed form of the GSVD; see, e.g.,
\cite{Hansen1989}.
When \(B=I_n\), the GSVD reduces to the usual SVD of \(A\).

Partition \(U\), \(V\), and \(X\) as \(U=[u_1,u_2,\dotsc,u_m]\), 
\(V=[v_1,v_2,\dotsc,v_p]\), and
\(X=[x_1,x_2,\dotsc,x_n]\), respectively.
Then the \(5\)-tuple
\((\alpha_i,\beta_i,u_i,v_i,x_i)\), \(i=1\), \(\dotsc\), \(q\) is called a
\emph{nontrival GSVD component} of
\((A,B)\)~\cite{HJ2023,HJ2021}.
The number \(\sigma_i=\alpha_i/\beta_i\) is called a \emph{nontrival generalized
singular value};
\(u_i\) and \(v_i\) are the corresponding \emph{left generalized singular
vectors};
\(x_i\) is the corresponding \emph{right generalized singular vector}.

The nontrival GSVD components can be computed either from
\begin{equation}
\label{eq:GEP1}
A\herm Ax_i=B\herm Bx_i\sigma_i^2
\end{equation}
or
\begin{equation}
\label{eq:GEP2-A}
\bmat{ & A \\ A\herm & }\bmat{u_i \\ w_i}
=\bmat{I & \\ & B\herm B}\bmat{u_i \\ w_i}
\sigma_i,
\end{equation}
where \(w_i=x_i/\beta_i\), \(i=1,\ldots,q\).
Numerically, \eqref{eq:GEP2-A} is often superior to~\eqref{eq:GEP1} since the
cross-product \(A\herm A\), which can largely increase the condition number,
is avoided.
In the case that \(B\herm B\) is ill-conditioned,
\begin{equation}
\label{eq:GEP2-B}
\bmat{ & B \\ B\herm & }
\bmat{v_i \\ x_i/\alpha_i}
=\bmat{I & \\ & A\herm A}
\bmat{v_i \\ x_i/\alpha_i}\sigma_i^{-1}
\end{equation}
is another alternative.
Finally, we remark that in~\eqref{eq:GEP2-A} and~\eqref{eq:GEP2-B} the
eigenvalues appear in pairs.
For instance, \eqref{eq:GEP2-A} can be reformulated as
\[
\bmat{ & A \\ A\herm & }\bmat{u_i & u_i \\ w_i & -w_i}
=\bmat{I & \\ & B\herm B}\bmat{u_i & u_i \\ w_i & -w_i}
\bmat{\sigma_i & \\ & -\sigma_i}.
\]

\subsection{The FEAST algorithm}
\label{subsec:FEAST}
FEAST~\cite{Polizzi2009,TP2014} is a popular contour integral-based
eigensolver for computing all the eigenvalues in a given domain on the complex
plane.
In the following we briefly review the FEAST algorithm for symmetric
eigenvalue problems.

Let \(H\) be an \(n\times n\) Hermitian matrix with spectral decomposition
\[
H=\sum_{i=1}^n\lambda_iq_iq_i\herm,
\]
where \(\lambda_i\)'s are the eigenvalues of \(H\), and \(q_i\)'s are the
corresponding normalized eigenvectors.
Let us consider a domain \(\Omega\subset\mathbb C\) that encloses the
eigenvalues \(\lambda_{s+1}\leq\lambda_{s+2}\leq\dotsb\leq\lambda_{s+k}\).
Then the corresponding \emph{spectral projector} is given by
\[
P_{\Omega}(H)
=\sum_{i=1}^kq_{s+i}q_{s+i}\herm
=\frac{1}{2\pi\mi}\int_{\partial\Omega}(\xi I_n-H)^{-1}\md\xi.
\]
By applying one step of subspace iteration on \(P_{\Omega}(H)\), a basis of
the invariant subspace of \(H\) with respect to the eigenvalues
\(\lambda_{s+1}\), \(\dotsc\), \(\lambda_{s+k}\) can be extracted from the
columns of
\begin{equation}
\label{eq:spectral_projector}
P_{\Omega}(H)Z
=\frac{1}{2\pi\mi}\int_{\partial\Omega}(\xi I_n-H)^{-1}Z\md\xi
\end{equation}
for almost any \(Z\in\mathbb C^{n\times k}\).%
\footnote{By \emph{almost} we mean the set of \(Z\) that violates this
statement has Lebesgue measure zero.}
Then these eigenvalues can be easily calculated by the Rayleigh--Ritz
projection scheme.

FEAST is an algorithm that makes use of~\eqref{eq:spectral_projector} to
compute the eigenvalues within \(\Omega\).
In practice, an approximate spectral projector \(\tilde P_{\Omega}(H)\) is
applied because a numerical quadrature rule is performed to evaluate the
right-hand-side of~\eqref{eq:spectral_projector}.
For instance, if \(\Omega=\SET{\xi\in\mathbb C}{\lvert\xi-\mu_0\rvert<\rho}\),
the trapezoidal rule applied to \(\partial\Omega\) yields
\begin{align*}
\tilde P_{\Omega}(H)Z=\frac1N\sum_{j=0}^{N-1}\omega_j(\xi_jI_n-H)^{-1}Z,
\end{align*}
where \(\xi_j\)'s are equally-spaced quadrature nodes on \(\partial\Omega\),
and \(\omega_j\)'s are the corresponding quadrature weights.
Then a number of shifted linear systems with multiple right-hand-sides need to
be solved;
see~\cite{FSFI2013,OKTS2010,SFT2013} for discussions on how to solve these
linear systems efficiently.
Since \(P_{\Omega}(H)\cdot Z\) is computed only approximately, a few steps
of subspace iteration may be needed to refine the solution.
In practice, an ellipse instead of a circle can be used as the contour to
achieve more rapid convergence, and the trapezoidal rule often has
satisfactory accuracy here since the integrand is a sufficiently smooth
periodic function; see, e.g., \cite{AT2015,GPTV2015,TW2014} for detailed discussions.
The Zolotarev quadrature is sometimes a good alternative to the trapezoidal
rule, especially when some unwanted eigenvalues are close to the
contour~\cite{GPTV2015}.

In reality we need to estimate the number of eigenvalues in \(\Omega\), which
is equal to \(\trace\bigl(P_{\Omega}(H)\bigr)\), as this number is often not
known in advance.
For small- to medium-size problems, the number of desired eigenvalues can be
determined by computing the positive index of inertia through the \(LDL\herm\)
decomposition of two shifted matrices~\cite{Parlett1998}.
For large-scale problems, stochastic trace
estimators~\cite{AT2011,DPS2016,FTS2010,GDL2022,Girard1989} are often used.
If estimating \(\trace\bigl(P_{\Omega}(H)\bigr)\) becomes expensive, an
alternative approach is to gradually double the dimension of 
the trial subspace, i.e., the number of columns of \(Z\), until a clear 
gap in the spectrum is detected.

The FEAST algorithm also carries over to generalized eigenvalue problems.
For instance, for a Hermitian--definite matrix pencil \((H,M)\), the
corresponding spectral projector becomes
\[
P_{\Omega}(H,M)
=\frac{1}{2\pi\mi}\int_{\partial\Omega}(\xi I_n-M^{-1}H)^{-1}\md\xi
=\frac{1}{2\pi\mi}\int_{\partial\Omega}(\xi M-H)^{-1}M\md\xi,
\]
which is a self-adjoint positive semidefinite operator in the \(M\)-inner
product.
Hence there is no essential difference compared to the usual symmetric
eigenvalue problem.

\section{A FEAST-SVD Algorithm}
\label{sec:algorithm-SVD}
We first discuss how to compute the (partial) singular value decomposition
using contour integration, as this is an important special case of GSVD.
We assume that the singular values of \(A\) that lie in the interval
\((\alpha,\beta)\) are of interest, with \(\beta>\alpha>0\).
In this section we discuss how to apply the FEAST algorithm to solve this
problem robustly.
Special care will be taken to properly handle user-supplied initial guesses,
since many real world applications naturally provide meaningful initial
guesses to eigensolvers.

\subsection{A naive approach}
\label{subsec:naive}
Finding \(A\)'s singular values in \((\alpha,\beta)\) is mathematically
equivalent to finding the eigenvalues of the Gram matrix \(A\herm A\) in
\((\alpha^2,\beta^2)\).
Therefore, we can apply the FEAST algorithm on \(A\herm A\) using a contour
that encloses \((\alpha^2,\beta^2)\).
However, the accuracy of this approach can be low when small singular values
of \(A\) are desired.
More accurate results can be expected if we compute the eigenvalues of the
Jordan--Wielandt matrix 
\[\check A=\bmat{& A \\ A\herm &}\]
instead.
The desired singular values correspond to \(\check A\)'s eigenvalues in
\((-\beta,-\alpha)\) \(\cup(\alpha,\beta)\).
A naive approach is to solve this symmetric eigenvalue problem using the FEAST
algorithm.
In the following we show that such a naive approach is \emph{not} robust with
respect to the initial guess.

By choosing \(a\geq(\beta-\alpha)/2\) and \(b>0\), the ellipse
\[
\Gamma^+
=\SET{\frac{\alpha+\beta}{2}+a\cos\theta+\mi b\sin\theta}{\theta\in[0,2\pi]}
\]
of semi-axes \(a\) and \(b\) encloses the interval \((\alpha,\beta)\);
see Figure~\ref{fig:ellipse}.
The corresponding spectral projector is
\[
P^+(\check A)
=\frac{1}{2\pi\mi}\int_{\Gamma^+}(\xi I_{m+n}-\check A)^{-1}\md\xi.
\]
If \(\tilde U\) and \(\tilde W\), respectively, contain approximate left and
right singular vectors of \(\check A\),%
\footnote{Matrices with tilde symbols are used to denote approximations to
certain exact values.}
we expect
\begin{equation}
\label{eq:proj+}
\bmat{\tilde U_{\new} \\ \tilde W_{\new}}
=P^+(\check A)\bmat{\tilde U \\ \tilde W}
\end{equation}
to deliver a basis of the desired subspace, as this can be viewed as one step
of subspace iteration.

\begin{figure}[tb!]
\centering
\includegraphics[scale=1]{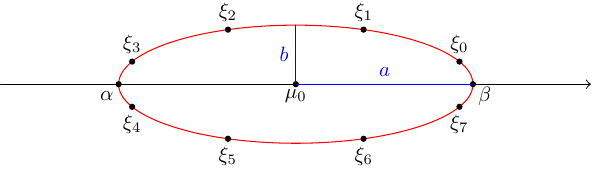}
\caption{An ellipse contour with \(a=(\beta-\alpha)/2\) and eight quadrature nodes (i.e., \(N=8\)).}
\label{fig:ellipse}
\end{figure}

However, we remark that~\eqref{eq:proj+} is \emph{not} always a good choice,
as \([\tilde U_{\new}\herm,\tilde W_{\new}\herm]\herm\) can be rank deficient.
Let us consider a matrix \(A\in\mathbb{C}^{m\times n}\) \((m>n)\) with its full SVD being partitioned as
\[
A=[U_\Inner,U_\Outer,U_\NULL]
\bmat{\Sigma_\Inner & \\ & \Sigma_\Outer \\ 0 & 0}[W_\Inner,W_\Outer]\herm,
\]
where the diagonal elements of \(\Sigma_\Inner\) are exactly the desired
singular values.
Then we can express \([\tilde U\herm,\tilde W\herm]\herm\) as
\[
\bmat{\tilde U \\ \tilde W}
=\frac{1}{\sqrt2}\left(\bmat{U_\Inner \\ W_\Inner}C_\Inner^+
+\bmat{U_\Outer \\ W_\Outer}C_\Outer^+
+\bmat{U_\Inner \\ -W_\Inner}C_\Inner^-
+\bmat{U_\Outer \\ -W_\Outer}C_\Outer^-\right)
+\bmat{U_\NULL \\ 0}C_\NULL.
\]
Applying \(P^+(\check A)\) yields
\[
\bmat{\tilde U_{\new} \\ \tilde W_{\new}}
=P^+(\check A)\bmat{\tilde U \\ \tilde W}
=\frac{1}{\sqrt2}\bmat{U_\Inner \\ W_\Inner}C_\Inner^+,
\]
so that severe cancellation can occur when
\(\lVert C_\Inner^+\rVert\ll\lVert C_\Inner^-\rVert\).
An extreme case is that \(\lVert C_\Inner^+\rVert=0\) while
\(\lVert C_\Inner^-\rVert=\Theta(1)\), which leads to
\[
\bmat{\tilde U_{\new} \\ \tilde W_{\new}}
=P^+(\check A)\bmat{\tilde U \\ \tilde W}
=0
\]
because \([\tilde U\herm,\tilde W\herm]\herm\) belongs to an invariant
subspace of \(\check A\) that is orthogonal to the desired one.
This causes the algorithm to break down in exact arithmetic, or converge
slowly in practice by taking into account quadrature and rounding errors.
This issue needs to be tackled as we would like to develop a robust solver.

\subsection{Structured Galerkin condition and Rayleigh--Ritz projection}
\label{subsec:SVD-Galerkin}
Clearly, the naive approach in Section~\ref{subsec:naive} does not make use of
the structure of \(\check A\).
In order to exploit the symmetry of \(\check A\), we first derive a structured
Galerkin condition and propose a tailored Rayleigh--Ritz projection scheme.

\subsubsection{Structured Galerkin condition}
Suppose that \([\tilde U\herm,\tilde W\herm]\herm\) contains approximate
eigenvectors of \(\check A\) with
\(\tilde U\herm\tilde U=\tilde W\herm\tilde W=I\),
and~\(\tilde\Sigma\) is a diagonal matrix whose diagonal entries consist of
the corresponding approximate eigenvalues.
Then the standard Galerkin condition reads
\begin{equation}
\label{eq:SVD-Galerkin}
\Span({R})\perp\Span\left(\bmat{\tilde U \\ \tilde W}\right),
\end{equation}
where
\[
R=\check A\bmat{\tilde U \\ \tilde W}-\bmat{\tilde U \\ \tilde W}\tilde\Sigma
\]
is the (block) residual and the symbol \(\perp\) means the
orthogonality between two subspaces.
Since the nonzero eigenvalues of \(\check A\) appear in pairs \(\pm\sigma\),
a \emph{structured} Galerkin
condition is
\begin{equation}
\label{eq:SVD-Galerkin2}
\Span({\check R})\perp\Span\left(\bmat{\tilde U & \tilde U \\ \tilde W & -\tilde W}\right),
\end{equation}
where
\[
\check R=\check A\bmat{\tilde U & \tilde U \\ \tilde W & -\tilde W}
-\bmat{\tilde U & \tilde U \\ \tilde W & -\tilde W}
\bmat{\tilde\Sigma \\ & -\tilde\Sigma}
\]
is the structured residual.
Clearly~\eqref{eq:SVD-Galerkin2} is stronger than~\eqref{eq:SVD-Galerkin},
under the assumption that \(\tilde\Sigma\) is positive definite.

\subsubsection{Rayleigh--Ritz projection}
The condition~\eqref{eq:SVD-Galerkin2} simplifies to
\(\tilde U\herm A\tilde W=\tilde\Sigma\), or equivalently,
\[
\Cases{\Span(A\tilde W-\tilde U\tilde\Sigma)\perp\Span(\tilde U),\\
\Span(A\herm\tilde U-\tilde W\tilde\Sigma)\perp\Span(\tilde W).}
\]
A Rayleigh--Ritz projection scheme can then be derived.
Suppose that we are given \(\tilde U_0\in\mathbb C^{m\times k}\) and
\(\tilde W_0\in\mathbb C^{n\times k}\) with
\(\tilde U_0\herm\tilde U_0=\tilde W_0\herm\tilde W_0=I_k\).
Let the (full) singular value decomposition of
\(A_{\proj}=\tilde U_0\herm A\tilde W_0\) be
\[
A_{\proj}=\tilde U_{\proj}\tilde\Sigma\tilde W_{\proj}\herm.
\]
By setting \(\tilde U_{\RR}\gets\tilde U_0\tilde U_{\proj}\),
\(\tilde W_{\RR}\gets\tilde W_0\tilde W_{\proj}\), we obtain
\(\tilde U_{\RR}\herm\tilde U_{\RR}=\tilde W_{\RR}\herm\tilde W_{\RR}=I_k\)
and
\[
\tilde U_{\RR}\herm A\tilde W_{\RR}=\tilde\Sigma,
\]
which satisfies the structured Galerkin condition~\eqref{eq:SVD-Galerkin2}.
This is essentially equivalent to the usual Petrov--Galerkin method for SVD.
We remark that
\[
\frac{1}{\sqrt2}
\bmat{\tilde U_{\RR} & \tilde U_{\RR} \\ \tilde W_{\RR} & -\tilde W_{\RR}}
\]
contains orthonormalized approximate eigenvectors of \(\check A\).
In this manner we are able to extract eigenvalues of \(\check A\) in pairs.

\subsection{Choice of the spectral projector}
\label{subsec:SVD-projector}

\subsubsection{A simple spectral projector}
\label{subsubsec:P^+}
The simplest spectral projector is to utilize the filter described by the
naive FEAST algorithm in Section~\ref{subsec:naive}.
The spectral projector is
\[
P^+(\check A)
=\frac{1}{2\pi\mi}\int_{\Gamma^+}(\xi I_{m+n}-\check A)^{-1}\md\xi.
\]
Assume that \(\tilde U\) and \(\tilde W\) contain approximate left and right
singular vectors of \(\check A\), respectively.
From the analysis in Section~\ref{subsec:naive}, it is evident that
\begin{equation*}
\bmat{\tilde U_{\new} \\ \tilde W_{\new}}
=P^+(\check A)\bmat{\tilde U \\ \tilde W}
\end{equation*}
can sometimes have severe cancellation.
Hence, this remains an unsuitable choice.

In practice cancellation can indeed occur if \(\tilde U\) and \(\tilde W\)
are constructed separately without proper post-processing.
Suppose \(\tilde U\approx U_\Inner Q_1\) and \(\tilde W\approx W_\Inner Q_2\)
are provided by the user, where \(Q_1\) and \(Q_2\) are unitary matrices.
Then we have \(\Span(\tilde U)\approx\Span(U_\Inner)\)
and \(\Span(\tilde W)\approx\Span(W_\Inner)\), indicating that both
\(\tilde U\) and \(\tilde W\) have good quality.
However, applying the simple projector \(P^+(\check A)\) to
\([\tilde U\herm,\tilde W\herm]\herm\) can cause the algorithm to break down
or converge slowly in practice, especially when \(Q_2\approx-Q_1\).
This issue needs to be tackled as we would like to develop a robust solver
that makes use of user-supplied initial guesses.

\subsubsection{A simple spectral projector with Rayleigh--Ritz projection}
\label{subsubsec:P^+_RR}
One way to resolve the potential issue of cancellation or rank deficiency is
to perform a Rayleigh--Ritz projection scheme in the first iteration of the
FEAST algorithm before applying the spectral projector.
Suppose that we update the orthogonal bases through
\(\tilde U_{\RR}\gets\tilde U\tilde U_{\proj}\) and
\(\tilde W_{\RR}\gets\tilde W\tilde W_{\proj}\), where
\(\tilde U_{\proj}\tilde\Sigma\tilde W_{\proj}\herm\) is the singular value
decomposition of \(\tilde U\herm A\tilde W\).
Let us represent \([\tilde U_{\RR}\herm,\tilde W_{\RR}\herm]\herm\) as
\[
\bmat{\tilde U_{\RR} \\ \tilde W_{\RR}}
=\frac{1}{\sqrt2}\left(\bmat{U \\ W}C^+_{\RR}+\bmat{U \\ -W}C^-_{\RR}\right)
+\bmat{U_\NULL \\ 0}C_\NULL.
\]
It can be verified that
\[
2\tilde\Sigma
=\bmat{\tilde U_{\RR} \\ \tilde W_{\RR}}\herm\check A
\bmat{\tilde U_{\RR} \\ \tilde W_{\RR}}
=(C_{\RR}^+)\herm\Sigma C_{\RR}^+-(C_{\RR}^-)\herm\Sigma C_{\RR}^-.
\]
Then we have
\[
\lVert C_{\RR}^+\rVert_{\Sigma}\geq\lVert C_{\RR}^-\rVert_{\Sigma},
\]
indicating that \(\lVert C_{\RR}^+\rVert\) is reasonably large.
Moreover, \(C_{\RR}^+\) is in general not rank deficient when \(\tilde\Sigma\)
is positive definite, since the smallest eigenvalue of
\((C_{\RR}^+)\herm\Sigma C_{\RR}^+\) is at least as large as
\(\lVert\tilde\Sigma^{-1}\rVert_2^{-1}\).
Thus, the issue of rank deficiency by applying~\eqref{eq:proj+} 
can be resolved by performing a Rayleigh--Ritz projection scheme.

However, in certain instances, performing a Rayleigh--Ritz projection can
yield \emph{worse} results.
To illustrate this, let us consider a small example.
Suppose that the largest singular value \(\sigma_1\) of a matrix
\(A\in\mathbb{C}^{4\times 4}\) is of interest.
Let \(u_i\) and \(w_i\), respectively, be the \(i\)th left and right singular
vectors of \(A\).
Suppose that the initial guess is given by
\[
U_0=[u_1,u_2]+[u_3,u_4]\bmat{\sigma_3 \\ & \sigma_4}^{-1}
W_{\proj}\Delta_\epsilon U_{\proj}\herm,
\qquad
W_0=[w_3,w_4],
\]
where
\[
U_{\proj}=\bmat{1 & 0 \\ 0 & 1},
\qquad
W_{\proj}=\frac{1}{\sqrt{2}}\bmat{1 & 1 \\ 1 & -1},
\qquad
\Delta_{\epsilon}=\bmat{\epsilon \\ & 0}.
\]
Performing a Rayleigh--Ritz procedure by means of \(U_0\) and \(W_0\) yields
\(U_0\herm AW_0=U_{\proj}\Delta_\epsilon W_{\proj}\herm\).
Then \(Z=[U_0\herm,W_0\herm]\herm\) is transformed to
\[
Z_{\RR}=\bmat{U_0U_{\proj} \\ W_0W_{\proj}}
=\bmat{u_1+\Delta_1 & u_2 \\
(w_3+w_4)/\sqrt2 & (w_3-w_4)/\sqrt2},
\]
with \(\lVert\Delta_1\rVert_2=O(\epsilon)\).
In practice \(\tilde P^+(A)Z_{\RR}\) can sometimes be much worse than
\(\tilde P^+(A)Z\), where \(\tilde P^+(A)\) is the approximate filter.

For instance, suppose \(A=\diag\set{0.93,0.92,0.91,0.09}\),
\(\tilde P^+(\sigma_1)=1\), \(\tilde P^+(\sigma_2)=0.9\),
\(\tilde P^+(\sigma_3)=0.8\), \(\tilde P^+(\sigma_4)=0.1\),
\(\tilde P^+(-\sigma_1)=\dotsb=\tilde P^+(-\sigma_4)=0\), and
\(\epsilon=10^{-16}\).
The relative residuals of the two approximate singular triplets using
\(\tilde P^+(A)Z\) are \(0.0053\) and \(0.0495\), respectively.
If \(\tilde P^+(A)Z_{\RR}\) is used instead, the relative residuals become
\(0.0269\) and \(0.0390\) --- the residual with respect to \(\sigma_1\) is
over \(5\times\) larger after performing the Rayleigh--Ritz procedure!

\subsubsection{A pair of contours}
\label{subsubsec:P^++P^-}
Another way to resolve the potential difficulty of rank deficiency when
applying~\eqref{eq:proj+} is to introduce a contour~\(\Gamma^-\) that is
symmetric to \(\Gamma^+\) with respect to the origin, i.e.,
\[
P^-(\check A)
=\frac{1}{2\pi\mi}\int_{\Gamma^-}(\xi I_{m+n}-\check A)^{-1}\md\xi
=\bmat{I_m \\ & -I_n}P^+(\check A)\bmat{I_m \\ & -I_n}.
\]
Then we expect
\[
P^+(\check A)+P^-(\check A)
=\frac{1}{2\pi\mi}\int_{\Gamma^+\cup\Gamma^-}
(\xi I_{m+n}-\check A)^{-1}\md\xi
\]
to be a safer projector compared to either \(P^+(\check A)\) or
\(P^-(\check A)\), since \(P^+(\check A)+P^-(\check A)\) can properly preserve
the useful information in \([\tilde U\herm,\tilde W\herm]\), regardless of the
sign of the Ritz values.

Though we introduce \(P^+(\check A)+P^-(\check A)\) in order to make the
filter robust, and expect that it is better than \(P^+(\check A)\),
we remark that \(P^+(\check A)+P^-(\check A)\) can sometimes be \emph{worse}
than \(P^+(\check A)\) in terms of convergence.
We use a simple example to illustrate this.
Suppose that we are computing the first eigenvector,
\([u_1\herm,w_1\herm]\herm\), from the initial guess
\[
\bmat{\tilde u \\ \tilde w}
=\frac{1}{\sqrt2}
\sum_{i=1}^n\bmat{u_i & u_i \\ w_i & -w_i}\bmat{\alpha_i^+ \\ \alpha_i^-}
+\bmat{U_\NULL \\ 0}c_\NULL
\]
using an approximate filter function \(\tilde P^+(\cdot)\) satisfying
\[
\tilde P^+(\mu)=\Cases{\Theta(1), & \text{if \(\mu\approx\sigma_1\)},\\
O(\epsilon), & \text{otherwise},}
\]
where \(\epsilon\) is a tiny positive number.
On the one hand, we have
\begin{align*}
\bmat{\tilde u_{\new}^{\pm} \\ \tilde w_{\new}^{\pm}}
&=\bigl(\tilde P^+(\check A)+\tilde P^-(\check A)\bigr)
\bmat{\tilde u \\ \tilde w}\\
&=\Theta(1)\cdot\bmat{u_1(\alpha_1^++\alpha_1^-) \\ w_1(\alpha_1^+-\alpha_1^-)}
+\sum_{i=2}^nO(\epsilon)\cdot
\bmat{u_i & u_i \\ w_i & -w_i}\bmat{\alpha_i^+ \\ \alpha_i^-}
+O(\epsilon)\bmat{U_\NULL \\ 0}c_\NULL.
\end{align*}

If cancellation occurs in either \(\alpha_1^++\alpha_1^-\) or
\(\alpha_1^+-\alpha_1^-\), which is not uncommon in practice, then it requires
additional effort to identify \(u_1\) and \(w_1\) since the remaining
\(O(\epsilon)\) terms have nonnegligible impact.
On the other hand,
\begin{align*}
\bmat{\tilde u_{\new}^+ \\ \tilde w_{\new}^+}
&=\tilde P^+(\check A)\bmat{\tilde u \\ \tilde w}\\
&=\Theta(1)\cdot \bmat{u_1 \\ w_1}\alpha_1^+
+O(\epsilon)\cdot \bmat{u_1 \\ -w_1}\alpha_1^-
+\sum_{i=2}^nO(\epsilon)\cdot
\bmat{u_i & u_i \\ w_i & -w_i}\bmat{\alpha_i^+ \\ \alpha_i^-}
+O(\epsilon)\bmat{U_\NULL \\ 0}c_\NULL
\end{align*}
is a reasonably good approximation to a multiple of
\([u_1\herm,w_1\herm]\herm\) as long as \(\lvert \alpha_1^+\rvert\) is not too
small.

Since we are only interested in one singular value in this example, it is
possible to compute the Rayleigh quotient explicitly.
We compute the following Rayleigh quotients:
\begin{equation}
\label{eq:RR-p++p-}
\begin{aligned}
\frac{(A\tilde w_{\new}^{\pm})\herm(A\tilde w_{\new}^{\pm})}
{(\tilde w_{\new}^{\pm})\herm\tilde w_{\new}^{\pm}}
&=\sigma_1^2-\frac{\sum_{i=2}^nO(\epsilon^2)(\sigma_1^2-\sigma_i^2)
(\alpha_i^+-\alpha_i^-)^2}
{\Theta(1)(\alpha_1^+-\alpha_1^-)^2
+\sum_{i=2}^nO(\epsilon^2)(\alpha_i^+-\alpha_i^-)^2},\\
\frac{(A\herm\tilde u_{\new}^{\pm})\herm(A\herm\tilde u_{\new}^{\pm})}
{(\tilde u_{\new}^{\pm})\herm\tilde u_{\new}^{\pm}}
&=\sigma_1^2-\frac{\sum_{i=2}^nO(\epsilon^2)(\sigma_1^2-\sigma_i^2)
(\alpha_i^++\alpha_i^-)^2+O(\epsilon^2)\sigma_1^2c_\NULL\herm c_\NULL}
{\Theta(1)(\alpha_1^++\alpha_1^-)^2
+\sum_{i=2}^nO(\epsilon^2)(\alpha_i^++\alpha_i^-)^2+O(\epsilon^2)c_\NULL\herm c_\NULL},
\end{aligned}
\end{equation}
and
\begin{equation}
\label{eq:RR-p+}
\begin{aligned}
\frac{(A\tilde w_{\new}^+)\herm(A\tilde w_{\new}^+)}
{(\tilde w_{\new}^+)\herm\tilde w_{\new}^+}
&=\sigma_1^2-\frac{\sum_{i=2}^nO(\epsilon^2)(\sigma_1^2-\sigma_i^2)
(\alpha_i^+-\alpha_i^-)^2}
{\bigl(\Theta(1)\alpha_1^+-O(\epsilon)\alpha_1^-\bigr)^2
+\sum_{i=2}^nO(\epsilon^2)(\alpha_i^+-\alpha_i^-)^2},\\
\frac{(A\herm\tilde u_{\new}^+)\herm(A\herm\tilde u_{\new}^+)}
{(\tilde u_{\new}^+)\herm\tilde u_{\new}^+}
&=\sigma_1^2-\frac{\sum_{i=2}^nO(\epsilon^2)(\sigma_1^2-\sigma_i^2)
(\alpha_i^++\alpha_i^-)^2+O(\epsilon^2)\sigma_1^2c_\NULL\herm c_\NULL}
{\bigl(\Theta(1)\alpha_1^++O(\epsilon)\alpha_1^-\bigr)^2
+\sum_{i=2}^nO(\epsilon^2)(\alpha_i^++\alpha_i^-)^2
+O(\epsilon^2)c_\NULL\herm c_\NULL}.
\end{aligned}
\end{equation}
Using the monotonicity of linear fractional functions, we easily see
that~\eqref{eq:RR-p+} is better when cancellation occurs in
either \(\alpha_1^++\alpha_1^-\) or \(\alpha_1^+-\alpha_1^-\),
while~\eqref{eq:RR-p++p-} is better when \(\lvert\alpha_1^+\rvert\) is tiny.
Since cancellation in \(\alpha_1^++\alpha_1^-\) or \(\alpha_1^+-\alpha_1^-\)
is not uncommon, especially for real matrices, the use of~\eqref{eq:RR-p++p-}
can potentially be problematic.

\subsubsection{An augmented scheme with a pair of contours}
\label{subsubsec:P^+andP^-}
We have seen that using \(P^+(\check A)\) alone can be problematic since
\([\tilde U\herm,\tilde W\herm]\herm\) can contain eigenvectors
of \(\check A\) with respect to negative eigenvalues so that
\(P^+(\check A)[\tilde U\herm,\tilde W\herm]\herm\) is rank deficient.
Using \(P^+(\check A)+P^-(\check A)\) is not fully satisfactory for our
purpose as it can sometimes be even worse than \(P^+(\check A)\).
Another possibility is to compute
\begin{equation}
\label{eq:proj+pair0}
\bmat{\tilde U_{\new} \\ \tilde W_{\new}}
=\bmat{P^+(\check A)\bmat{\tilde U \\ \tilde W},
P^-(\check A)\bmat{\tilde U \\ \tilde W}}
\end{equation}
and then use \(\tilde U_{\new}\) and \(\tilde W_{\new}\) for the
Rayleigh--Ritz projection.
Note that
\[
P^-(\check A)\bmat{\tilde U \\ \tilde W}
=\bmat{I_m \\ & -I_n}P^+(\check A)
\bmat{\tilde U \\ -\tilde W}.
\]
We can use
\begin{equation}
\label{eq:proj+pair}
\bmat{\tilde U_{\new} \\ \tilde W_{\new}}
=P^+(\check A)\bmat{\tilde U & \tilde U \\ \tilde W & -\tilde W}
\end{equation}
instead of~\eqref{eq:proj+pair0}.
In this manner all useful information from \(P^+(\check A)\) and
\(P^-(\check A)\) is kept, while the price to pay is that the size of 
the projected problem is doubled.

Fortunately, we do not always have to double the problem size.
The issue of using \(P^+(\check A)\) alone is avoided by
applying~\eqref{eq:proj+pair} since \(P^-(\check A)\) is also 
implicitly involved.
Starting from the second iteration, we can ensure that components with respect
to positive eigenvalues of \(\check A\) are already encoded in
\([\tilde U\herm,\tilde W\herm]\herm\).
Therefore, we only need to apply~\eqref{eq:proj+pair} in the first iteration.
Then in subsequent iterations applying~\eqref{eq:proj+} already suffices.
In fact, such a combination of~\eqref{eq:proj+pair} and~\eqref{eq:proj+} can
dramatically accelerate the convergence.
We shall illustrate this phenomenon by numerical experiments in
Section~\ref{sec:experiments}, and provide a brief explanation in the
Appendix.
Finally, we summarize this strategy as Algorithm~\ref{alg:FEAST-SVD}.

\begin{algorithm}
\caption{A FEAST-SVD algorithm.}
\label{alg:FEAST-SVD}
\begin{algorithmic}[1]
\REQUIRE A matrix \(A\in\mathbb{C}^{m\times n}\);
location of desired singular values \((\alpha,\beta)\);
initial guess \(U_0\in \mathbb{C}^{m\times \ell}\) and
\(W_0\in \mathbb{C}^{n\times\ell}\) satisfying \(U_0\herm U_0=W_0\herm W_0=I_\ell\);
quadrature nodes with weights \((\xi_j,\omega_j)\)'s for \(j=1\), \(2\),
\(\dotsc\), \(N\).
The number of desired singular values \(k\) cannot exceed \(\ell\).
\ENSURE Approximate singular triples \((\Sigma,U,W)\).
\STATE \(U\gets U_0\), \(W\gets W_0\).
\FOR{\({\mathtt{iter}}=1\), \(2\), \(\dotsc\)}
  \IF{\({\mathtt{iter}}=1\)}
    \STATE \(Z\gets\bmat{U & U \\ W & -W}\).
  \ELSE
    \STATE \(Z\gets\bmat{U \\ W}\).
  \ENDIF
  \STATE \([U\herm,W\herm]\herm\gets\sum_{j=1}^N\omega_j(\xi_jI-\check A)^{-1}Z\).
  \STATE Orthogonalize \(U\) and \(W\) so that \(U\herm U=W\herm W=I\).
  \STATE Solve SVD of \(A_{\proj}=U\herm AW\) to obtain the singular triplet
    \((\Sigma,U_{\proj},W_{\proj})\).
  \STATE \(U\gets UU_{\proj}\), \(W\gets WW_{\proj}\).
  \STATE Check convergence.
  \IF{\({\mathtt{iter}}=1\)}
    \STATE Choose \(\ell\) best components from \((\Sigma,U,W)\) as the new
      \((\Sigma,U,W)\).
    \label{alg-step:chooseL}
  \ENDIF
\ENDFOR
\end{algorithmic}
\end{algorithm}

It is worth mentioning that in Step~\ref{alg-step:chooseL} of
Algorithm~\ref{alg:FEAST-SVD}, there are \(2\ell\) SVD triplets after the
Rayleigh--Ritz projection, and we need to choose \(\ell\) of them.
The singular values are usually chosen to be the closest to the desired
interval.
If there are more than \(\ell\) Ritz values lying in the interval, which is not
uncommon in practice, the SVD triplets with smaller residuals will be chosen.

Once a few singular triplets have attained satisfactory accuracy, it is
sensible to deflate these singular triplets to reduce the cost in subsequent
calculations.
The following \emph{soft locking} strategy~\cite{KALO2007} can be adopted:
the Rayleigh--Ritz projection involves all approximate singular vectors, while
spectral projectors are only applied to undeflated singular vectors.

\subsection{Trace estimation}
\label{subsec:trace-SVD}
In practice, we need to estimate the number of singular values \(k\) that lie
in the desired interval \((\alpha,\beta)\) before applying
Algorithm~\ref{alg:FEAST-SVD}.
In Section~\ref{subsec:FEAST}, we already mentioned that \(k\) is equal to the
trace of the spectral projector.
Once an estimate of \(k\) is available, we can choose an appropriate initial
guess with sufficiently many columns.

In order to estimate \(k\) we adopt the Monte Carlo trace
estimator~\cite{Girard1989}.
Notice that
\[
P^+(\check A)
=\frac{1}{2\pi\mi}\int_{\Gamma^+}(\xi I_{m+n}-\check A)^{-1}\md\xi
\]
is Hermitian and positive semidefinite.
Let \(y\in\mathbb R^{m+n}\) be a random vector satisfying
\(\Expect[yy\herm]=I_{m+n}\).
Then we have
\[
\trace\bigl(P^+(\check A)\bigr)=\Expect\bigl[y\herm P^+(\check A)y\bigr].
\]
Thus the problem reduces to calculating
\(\hat y=P^+(\check A)y\approx\sum_{j=0}^{N-1}\omega_j(\xi_jI-\check A)^{-1}y/N\).
In practice we can draw several independent samples \(y_1\), \(y_2\),
\(\dotsc\), \(y_K\) of \(y\) to obtain a reasonably accurate estimate of
\(\trace\bigl(P^+(\check A)\bigr)\) through
\[
\trace\bigl(P^+(\check A)\bigr)
\approx\frac1K\sum_{i=1}^Ky_i\herm\hat y_i.
\]

Finally, we remark that when computing \(\hat y\), the number of quadrature
nodes \(N\) does not need to be the same as that in
Algorithm~\ref{alg:FEAST-SVD}.
Sometimes a smaller number of quadrature nodes suffices in the trace
estimator.

 \section{A FEAST-GSVD Algorithm}
\label{sec:algorithm-GSVD}
Similar to (partial) singular value decomposition, the computation of
generalized singular value decomposition can be reduced to a generalized
symmetric eigenvalue problem through~\eqref{eq:GEP1}, \eqref{eq:GEP2-A}
or~\eqref{eq:GEP2-B}.
We assume that \(B\) is moderately well-conditioned so that~\eqref{eq:GEP2-A}
is an appropriate choice.
In the following we discuss how to derive a FEAST algorithm for the
\emph{Jordan--Wielandt matrix pencil}
\[
(\check A,\check B)
=\left(\bmat{ & A \\ A\herm & },\bmat{I & \\ & B\herm B}\right).
\]

\subsection{Rayleigh--Ritz projection}
\label{sec:GSVD-projector}
Let \([\tilde U\herm,\tilde W\herm]\herm\) consist of approximate
eigenvectors of \((\check A,\check B)\), and \(\tilde\Sigma\) be a positive
definite diagonal matrix whose diagonal entries are the corresponding
approximate eigenvalues.
The standard Galerkin condition becomes
\begin{equation*}
\label{eq:GSVD-Galerkin}
\Span\left(\check A\bmat{\tilde U \\ \tilde W}
-\check B\bmat{\tilde U \\ \tilde W}\tilde\Sigma\right)
\perp\Span\left(\bmat{\tilde U \\ \tilde W}\right).
\end{equation*}
Then a structured Galerkin condition is
\begin{equation*}
\label{eq:GSVD-Galerkin2}
\Span\left(\check A\bmat{\tilde U & \tilde U \\ \tilde W & -\tilde W}
-\check B\bmat{\tilde U & \tilde U \\ \tilde W & -\tilde W}
\bmat{\tilde\Sigma \\ & -\tilde\Sigma}\right)
\perp\Span\left(\bmat{\tilde U & \tilde U \\ \tilde W & -\tilde W}\right),
\end{equation*}
which simplifies to
\begin{equation}
\label{eq:GSVD-Galerkin2-simplified}
\Cases{
\Span(A\tilde W-\tilde U\tilde\Sigma)\perp\Span(\tilde U),\\
\Span(A\herm\tilde U-B\herm B\tilde W\tilde\Sigma)\perp\Span(\tilde W).}
\end{equation}

To derive a Rayleigh--Ritz projection scheme based
on~\eqref{eq:GSVD-Galerkin2-simplified},
we assume that \(\tilde U_0\in\mathbb C^{m\times k}\) and
\(\tilde W_0\in\mathbb C^{n\times k}\) contain orthonormalized columns in the
sense that
\[
\tilde U_0\herm\tilde U_0=I_k,
\qquad\tilde W_0\herm B\herm B\tilde W_0=I_k.
\]
Let the singular value decomposition of
\(A_{\proj}=\tilde U_0\herm A\tilde W_0\) be
\[
A_{\proj}=\tilde U_{\proj}\tilde\Sigma\tilde W_{\proj}\herm.
\]
By setting \(\tilde U_{\RR}\gets\tilde U_0\tilde U_{\proj}\),
\(\tilde W_{\RR}\gets\tilde W_0\tilde W_{\proj}\), we obtain
\[
\tilde U_{\RR}\herm A\tilde W_{\RR}=\tilde\Sigma,
\qquad\tilde U_{\RR}\herm\tilde U_{\RR}=I_k,
\qquad\tilde W_{\RR}\herm B\herm B\tilde W_{\RR}=I_k.
\]

The other approximate left generalized singular vectors of \((A, B)\) can be
obtained by
\[
\tilde V=B\tilde W_{\RR}.
\]
Computing \(\tilde V\) from \(\tilde W_{\RR}\) is reasonably accurate
here, since we assume that \(B\) is moderately well-conditioned.
If, in addition, \(X\), \(C\), and \(S\) are needed, they can be computed
through
\[
\qquad\tilde S=(I+\tilde \Sigma^2)^{-1/2},
\qquad\tilde C=\tilde\Sigma\tilde S,
\qquad\tilde X=\tilde W_{\RR}\tilde S.
\]

\subsection{A FEAST algorithm for GSVD}
Suppose that we are interested in computing the generalized singular values
in the interval \((\alpha,\beta)\) with \(\alpha\geq0\).
Let \(\Gamma^+\) be a contour that encloses \((\alpha,\beta)\).
The corresponding spectral projector is
\[
P^+(\check A,\check B)
=\frac{1}{2\pi\mi}\int_{\Gamma^+}(\xi\check B-\check A)^{-1}\check B\md\xi.
\]
For approximate generalized singular vectors \(\tilde U\) and \(\tilde W\),
the filtering scheme
\begin{equation}
\label{eq:proj+GSVD}
\bmat{\tilde U_{\new} \\ \tilde W_{\new}}
=P^+(\check A,\check B)\bmat{\tilde U \\ \tilde W}
\end{equation}
is expected to produce a better approximation.
Similar to what we have discussed in Section~\ref{sec:algorithm-SVD} for SVD,
applying~\eqref{eq:proj+GSVD} is not robust in general.
A safer filtering scheme
\begin{equation}
\label{eq:proj+pair-GSVD}
\bmat{\tilde U_{\new} \\ \tilde W_{\new}}
=P^+(\check A,\check B)\bmat{\tilde U & \tilde U \\ \tilde W & -\tilde W}
=\bmat{P^+(\check A,\check B)\bmat{\tilde U \\ \tilde W},
\bmat{I_m \\ & -I_n}P^-(\check A,\check B)\bmat{\tilde U \\ \tilde W}}
\end{equation}
needs to be applied in the first iteration, where
\[
P^-(\check A,\check B)
=\frac{1}{2\pi\mi}\int_{\Gamma^-}(\xi\check B-\check A)^{-1}\check B\md\xi
=\bmat{I_m \\ & -I_n}P^+(\check A,\check B)\bmat{I_m \\ & -I_n}
\]
is the spectral projector associated with the contour \(\Gamma^-\) that is
symmetric to \(\Gamma^+\) with respect to the origin.
Algorithm~\ref{alg:FEAST-GSVD} summarizes a FEAST algorithm for GSVD based
on~\eqref{eq:proj+GSVD} and~\eqref{eq:proj+pair-GSVD}.

\begin{algorithm}[!tb]
\caption{A FEAST-GSVD algorithm.}
\label{alg:FEAST-GSVD}
\begin{algorithmic}[1]
\REQUIRE Two matrices \(A\in\mathbb{C}^{m\times n}\),
\(B\in\mathbb{C}^{p\times n}\);
location of desired generalized singular values \((\alpha,\beta)\);
initial guess \(U_0\in \mathbb{C}^{m\times \ell}\) and
\(W_0\in \mathbb{C}^{n\times \ell}\) satisfying
\(U_0\herm U_0=W_0\herm B\herm BW_0=I_\ell\);
quadrature nodes with weights \((\xi_j,\omega_j)\)'s for \(j=1\), \(2\),
\(\dotsc\), \(N\).
The number of desired generalized singular values \(k\) cannot exceed \(\ell\).
\ENSURE Approximate generalized singular values \(\Sigma\) and the
corresponding left and right generalized singular vectors \((U,W)\).
\STATE \(U\gets U_0\), \(W\gets W_0\).
\FOR{\({\mathtt{iter}}=1,2,\dotsc\) until convergence}
  \IF{\({\mathtt{iter}}=1\)}
    \STATE \(Z\gets\bmat{U & U \\ W & -W}\).
  \ELSE
    \STATE \(Z\gets\bmat{U \\ W}\).
  \ENDIF
  \STATE \([U\herm,W\herm]\herm\gets\sum_{j=1}^N\omega_j(\xi_j\check B-\check A)^{-1}\check BZ\).
  \STATE Orthogonalize \(U\) and \(W\) so that \(U\herm U=W\herm B\herm BW=I\).
  \STATE Solve SVD of \(A_{\proj}=U\herm AW\) to obtain the singular triplet
    \((\Sigma,U_{\proj},W_{\proj})\).
  \STATE \(U\gets UU_{\proj}\), \(W\gets WW_{\proj}\).
  \STATE Check convergence.
  \IF{\({\mathtt{iter}}=1\)}
    \STATE Choose \(\ell\) best components from \((\Sigma,U,W)\) as the new \((\Sigma,U,W)\).
  \ENDIF
\ENDFOR
\end{algorithmic}
\end{algorithm}

We remark that additional care is required in order to estimate the trace of
the spectral projector for GSVD.
Since
\[
P^+(\check A,\check B)
=\frac{1}{2\pi\mi}\int_{\Gamma^+}(\xi\check B-\check A)^{-1}\check B\md\xi
\]
is not Hermitian, the classical Monte Carlo trace estimator can be very
inaccurate.
To avoid this difficulty, we observe that \(\trace\bigl(P^+(\check A,\check B)\bigr)\) is equal to the trace
of
\begin{equation}
\label{eq:trace-GSVD}
\frac{1}{2\pi\mi}C\herm
\int_{\Gamma^+}(\xi\check B-\check A)^{-1}\md\xi\cdot C
\end{equation}
where \(C=\diag\set{I_m,B\herm}\).
The matrix in~\eqref{eq:trace-GSVD} is Hermitian and positive semidefinite so
that
\[
\trace\biggl(\frac{1}{2\pi\mi}C\herm
\int_{\Gamma^+}(\xi\check B-\check A)^{-1}\md\xi\cdot C\biggr)
=\frac{1}{2\pi\mi}\Expect\biggl[y\herm C\herm
\int_{\Gamma^+}(\xi\check B-\check A)^{-1}Cy\md\xi\biggr]
\]
can be accurately estimated through several independent samples of \(y\) with
\(\Expect[yy\herm]=I_{m+p}\).

\section{Numerical Experiments}
\label{sec:experiments}
In this section we report experimental results of
Algorithms~\ref{alg:FEAST-SVD} and~\ref{alg:FEAST-GSVD}.
All numerical experiments were performed using MATLAB R2022b on a Linux server
with two 16-core Intel Xeon Gold~6226R~2.90~GHz CPUs and 1024~GB of main
memory.

\subsection{Experiment settings}
\label{subsec:setting}
We choose twelve test matrices from the SuiteSparse Matrix Collection
(formally, the University of Florida Sparse Matrix Collection)~\cite{DH2011},
as listed in Table~\ref{tab:matrices}.
The \texttt{rosen10}\(\trans\) and \texttt{flower\_5\_4}\(\trans\),
respectively, represent the transpose of \texttt{rosen10} and
\texttt{flower\_5\_4}.
For GSVD, the matrix \(B\) is taken as the scaled transpose of the discretized
first order derivative, i.e.,
\[
B=\bmat{1 & -1 \\ & 1 & -1 \\ & & \ddots & \ddots \\ & & & 1 & -1}\trans
\in\mathbb C^{(n+1)\times n}.
\]
Table~\ref{tab:matrices} also contains other information regarding the
experiments, such as the desired interval \((\alpha,\beta)\), the number of
(generalized) singular values in the interval \(\kSVD\) (or \(\kGSVD\)),
reported by \texttt{eig()} in MATLAB, as well as the estimated number
\(\tildekSVD\) (or \(\tildekGSVD\)) through stochastic trace estimator.
These test matrices are chosen from a broad range of applications, and have
been used by researchers for testing SVD/GSVD algorithms~\cite{JZ2022}.

\begin{table}[tb!]
\setlength{\tabcolsep}{3pt}
\centering
\caption{List of test matrices.}
\label{tab:matrices}
\begin{tabular}{ccccccccccc}
\hline
ID & Matrix \(A\) & \(m\) & \(n\) & \(\nnz(A)\) & \(\lVert A\rVert_2\) &
\((\alpha,\beta)\) &\(\kSVD\)  &\(\tildekSVD\) &\(\kGSVD\) &\(\tildekGSVD\)\\
\hline
 1 & \texttt{plat1919}               & \phantom{00}1,919 & \phantom{0}1,919 & \phantom{0}32,399
                                     & \phantom{00}2.93  & \((2.1,2.5) \) & 8   & 8.42   & 5  & 4.62 \\
 2 & \texttt{rosen10}\(\trans\)      & \phantom{00}6,152 & \phantom{0}2,056 & \phantom{0}68,302
                                     & \phantom{}20,200  & \((9.9,10.1)\) & 12  & 11.32  & 17 & 16.47\\
 3 & \texttt{GL7d12}                 & \phantom{00}8,899 & \phantom{0}1,019 & \phantom{0}37,519
                                     & \phantom{00}14.4  & \((11,12)   \) & 17  & 17.91  & 19 & 18.23\\
 4 & \texttt{3elt\_dual}             & \phantom{00}9,000 & \phantom{0}9,000 & \phantom{0}26,556
                                     & \phantom{0000}3   & \((1.5,1.6) \) & 368 & 346.31 & 251& 261.46\\
 5 & \texttt{fv1}                    & \phantom{00}9,604 & \phantom{0}9,604 & \phantom{0}85,264
                                     & \phantom{00}4.52  & \((3.1,3.15)\) & 89  & 84.18  &56  & 56.21\\
 6 & \texttt{shuttle\_eddy}          & \phantom{0}10,429 & \phantom{}10,429 & \phantom{}103,599
                                     & \phantom{00}16.2  & \((7,7.01)  \) & 6   & 7.24   & 2  & 2.07\\
 7 & \texttt{nopoly}                 & \phantom{0}10,774 & \phantom{}10,774 & \phantom{0}70,842
                                     & \phantom{00}23.3  & \((12,12.5) \) & 340 & 352.44 & 159& 160.45\\
 8 & \texttt{flower\_5\_4}\(\trans\) & \phantom{0}14,721  & \phantom{0}5,226& \phantom{0}43,942
                                     & \phantom{00}5.53  & \((4.1,4.3) \) & 137 & 136.75 & 51 & 53.09\\
 9 & \texttt{barth5}                 & \phantom{0}15,606 & \phantom{}15,606 & \phantom{0}61,484
                                     & \phantom{00}4.23  & \((1.5,1.6) \) & 384 & 389.89 & 317& 326.99\\
10 & \texttt{L-9}                    & \phantom{0}17,983 & \phantom{}17,983 & \phantom{0}71,192
                                     & \phantom{0000}4   & \((1.2,1.3) \) & 477 & 489.40& 607 & 619.06\\
11 & \texttt{crack\_dual}            & \phantom{0}20,141 & \phantom{}20,141 & \phantom{0}60,086
                                     & \phantom{0000}3   & \((1,1.1)   \) & 330 & 329.34 & 602 & 601.41\\
12 & \texttt{rel8}                   & \phantom{}345,688 & \phantom{}12,347 & \phantom{}821,839
                                     & \phantom{00}18.3  & \((13,14)   \) & 13  & 12.69  & 185 & 186.77\\
\hline
\end{tabular}
\end{table}

To check the correctness of computed GSVD components
\((\hat\alpha_i,\hat\beta_i,\hat u_i,\hat v_i,\hat x_i)\)'s, we need to verify
\begin{subequations}
\label{eq:conv-orig}
\begin{align}
\lVert A\hat x_i-\hat u_i\hat\alpha_i\rVert_2
&=O(\macheps)\cdot\bigl(\lVert A\rVert_2\lVert\hat x_i\rVert_2
+\hat\alpha_i\bigr),\label{eq:conv-orig-1}\\
\lVert B\hat x_i-\hat v_i\hat\beta_i\rVert_2
&=O(\macheps)\cdot\bigl(\lVert B\rVert_2\lVert\hat x_i\rVert_2
+\hat\beta_i\bigr),\label{eq:conv-orig-2}\\
\lVert A\herm\hat u_i\hat\beta_i-B\herm\hat v_i\hat\alpha_i\rVert_2
&=O(\macheps)\cdot\bigl(\hat\beta_i\lVert A\rVert_2
+\hat\alpha_i\lVert B\rVert_2\bigr),\label{eq:conv-orig-3}\\
\lVert\hat U\herm\hat U-I\rVert_2&=O(\macheps),
\label{eq:conv-orig-4}\\
\lVert\hat V\herm\hat V-I\rVert_2&=O(\macheps).
\label{eq:conv-orig-5}
\end{align}
\end{subequations}
where \(\macheps\) is the unit roundoff,
and \(\hat U=[\hat u_1,\hat u_2,\dotsc]\),
\(\hat V=[\hat v_1,\hat v_2,\dotsc]\).
Equations~\eqref{eq:conv-orig-2}, \eqref{eq:conv-orig-4},
and~\eqref{eq:conv-orig-5} are automatically ensured in
Algorithm~\ref{alg:FEAST-GSVD}.
As \(w_i\)'s instead of \(x_i\)'s are computed in
Algorithm~\ref{alg:FEAST-GSVD}, in practice we use the convergence criteria
\begin{subequations}
\label{eq:conv}
\begin{align}
\lVert A\hat w_i-\hat u_i\hat\sigma_i\rVert_2
&\leq\tol\cdot\bigl(\lVert A\rVert_2\lVert\hat w_i\rVert_2
+\lvert\hat\sigma_i\rvert\bigr),\label{eq:conv-1}\\
\lVert A\herm\hat u_i-B\herm B\hat w_i\hat\sigma_i\rVert_2
&\leq\tol\cdot\bigl(\lVert A\rVert_2
+\lvert\hat\sigma_i\rvert\lVert B\rVert_2^2\lVert\hat w_i\rVert_2\bigr),
\label{eq:conv-2}
\end{align}
\end{subequations}
where \(\tol\) is a user-specified threshold.
However, we remark that as an interior eigensolver, the number of Ritz values
within the desired interval \((\alpha,\beta)\) can be larger than the actual
number of generalized singular values in \((\alpha,\beta)\).
Thus it is impossible to request all of these Ritz values to converge.
As a remedy, we impose an additional stopping criterion---the algorithm
terminates when either all Ritz values within \((\alpha,\beta)\) have converged
according to~\eqref{eq:conv}, or the number of converged Ritz values remains
unchanged in two consecutive iterations.

In all runs, the initial guess is randomly generated unless otherwise
specified, with \(\ell=\lceil3/2\cdot\tildekSVD\rceil+5\) (or
\(\ell=\lceil3/2\cdot\tildekGSVD\rceil+5\)) columns, where \(\tildekSVD\) (or
\(\tildekGSVD\)) is estimated using \(12\) quadrature nodes and \(K=30\) random
trial vectors.
The contour \(\Gamma^+\) is chosen as the ellipse with major axis
\([\alpha,\beta]\) and aspect ratio \(\rho=a/b=5\).
The number of quadrature nodes on \(\Gamma^+\) is set to \(N=12\).
The convergence threshold is set to \(\tol=10^{-14}\sqrt m\).

\subsection{Convergence history}
Figure~\ref{fig:performance_svd} shows the convergence history for computing
the singular values in the desired interval \((\alpha,\beta)\).
We plot the maximum relative residual
\[
\max_{\sigma_i\in(\alpha,\beta)}\max\biggl\lbrace
\frac{\lVert A\hat w_i-\hat u_i\hat\sigma_i\rVert_2}
{\lVert A\rVert_2\lVert\hat w_i\rVert_2+\lvert\hat\sigma_i\rvert},
\frac{\lVert A\herm\hat u_i-B\herm B\hat w_i\hat\sigma_i\rVert_2}
{\lVert A\rVert_2
+\lvert\hat\sigma_i\rvert\lVert B\rVert_2^2\lVert\hat w_i\rVert_2}
\biggr\rbrace
\]
for singular values inside \((\alpha,\beta)\) in all convergence plots
throughout this section.
Algorithm~\ref{alg:FEAST-SVD} typically finds all singular values in the
desired interval within three iterations for most matrices, and, moreover, due
to our additional convergence criterion the third iteration is merely for
confirming the convergence.
Even for matrices with relatively slow convergence, most singular values are
found in the first two or three iterations.
Subsequent iterations only make the marginal contribution on the convergence of
one or two singular values.

\begin{figure}[tb!]
\centering
\begin{tabular}{ccc}
\includegraphics[width=0.3\textwidth]{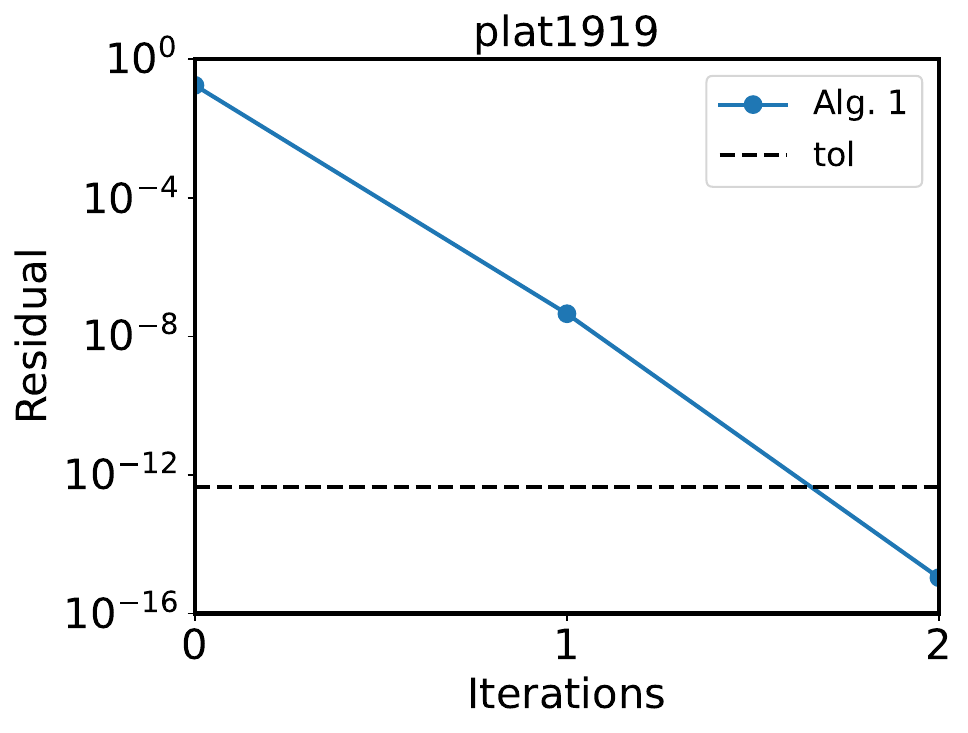} &
\includegraphics[width=0.3\textwidth]{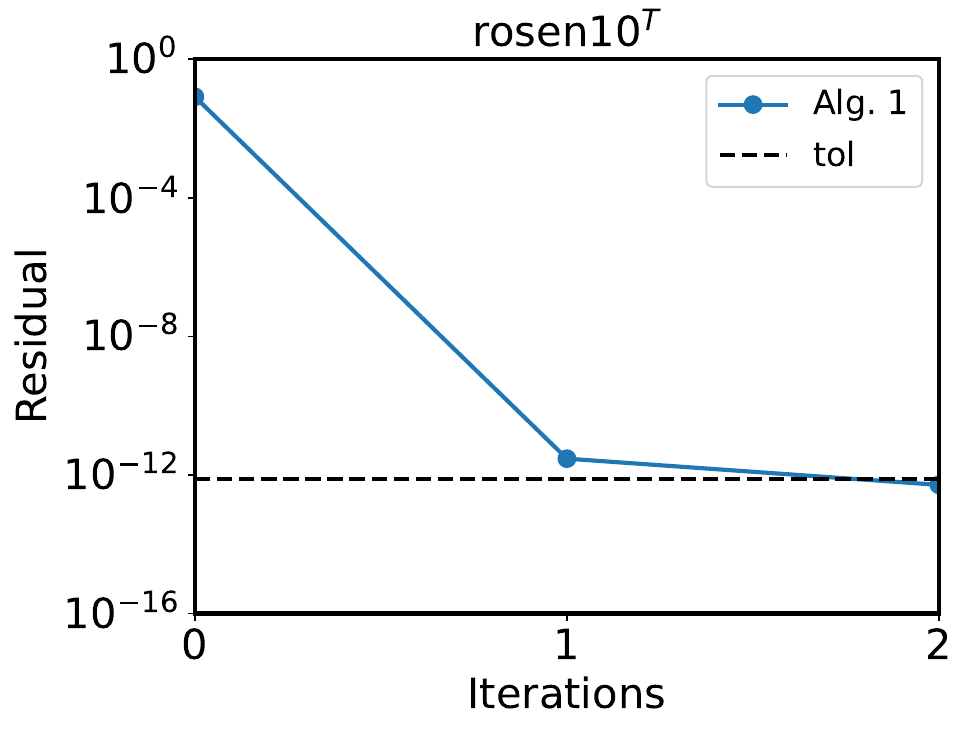} &
\includegraphics[width=0.3\textwidth]{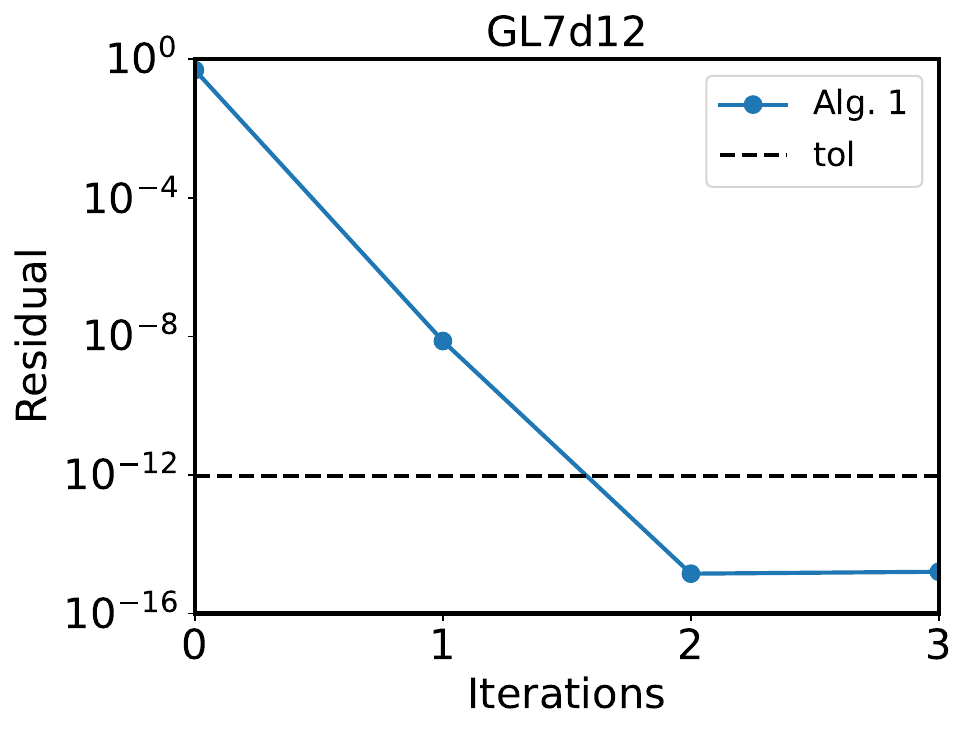} \\
\includegraphics[width=0.3\textwidth]{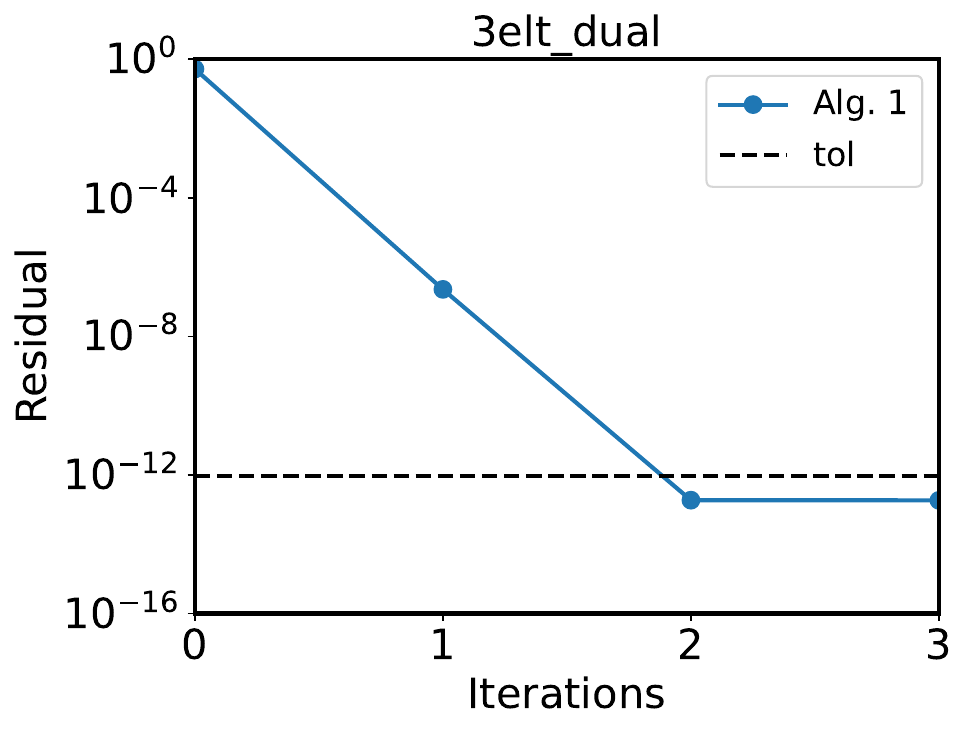} &
\includegraphics[width=0.3\textwidth]{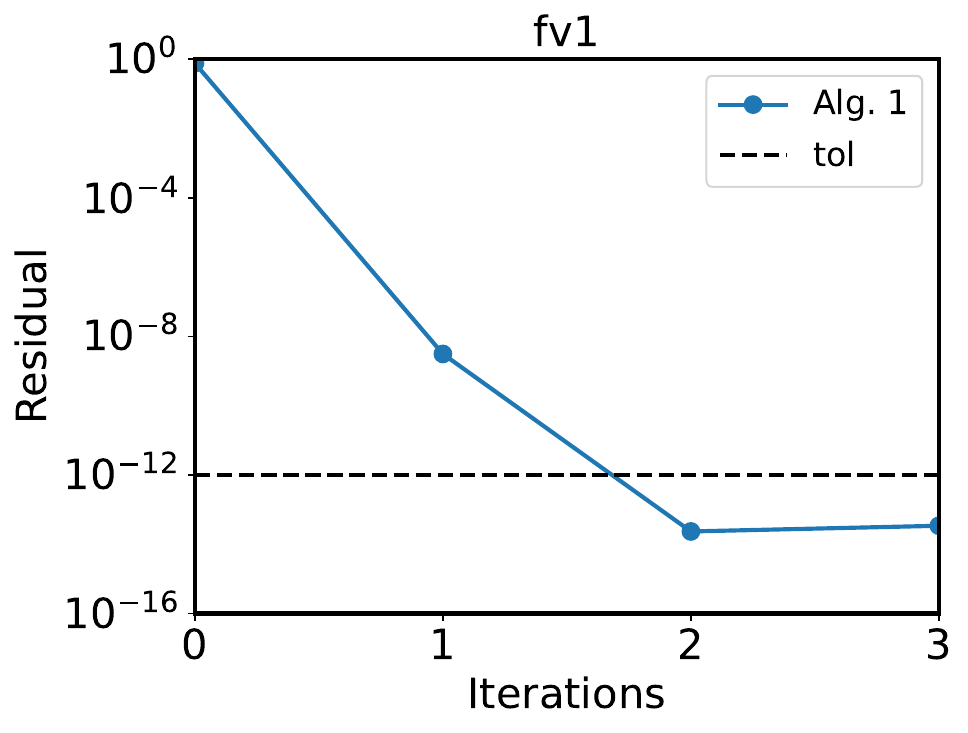} &
\includegraphics[width=0.3\textwidth]{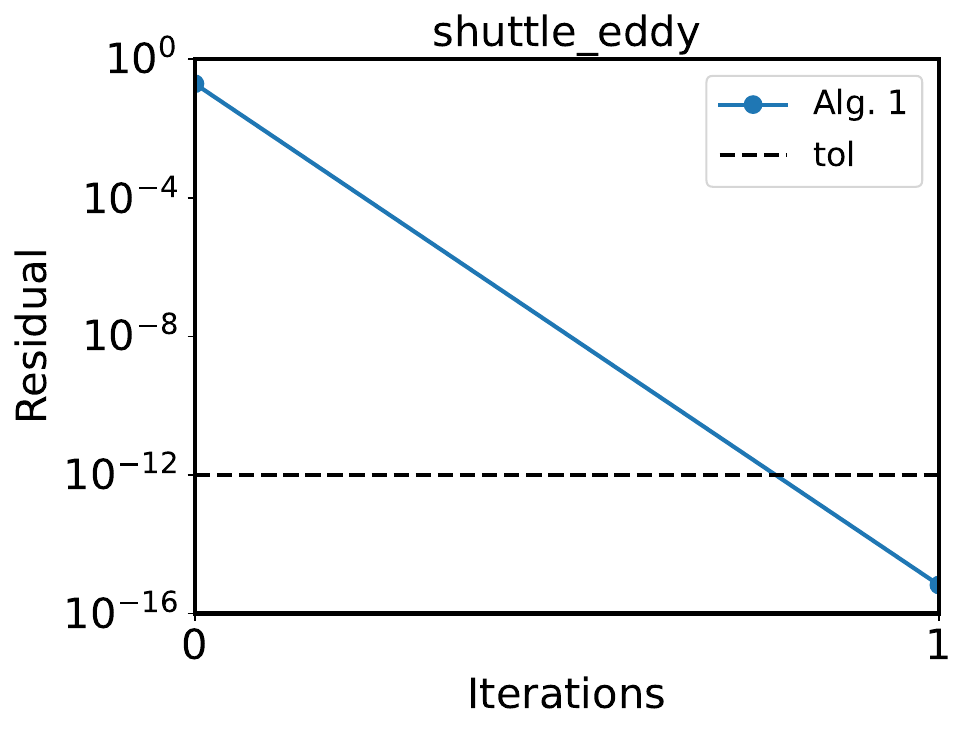} \\
\includegraphics[width=0.3\textwidth]{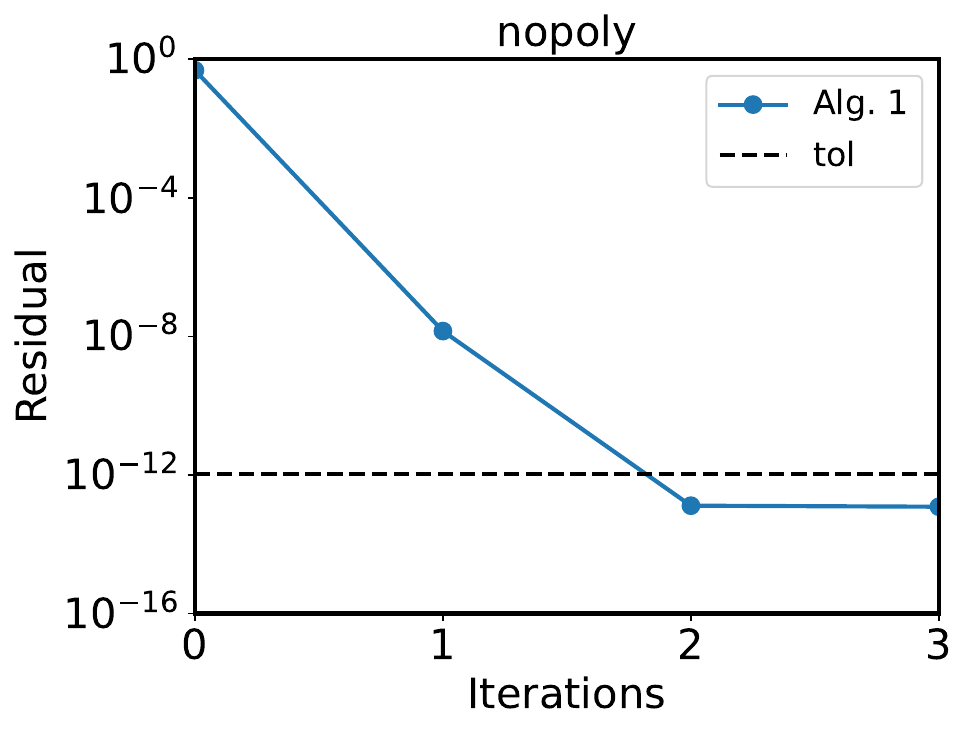} &
\includegraphics[width=0.3\textwidth]{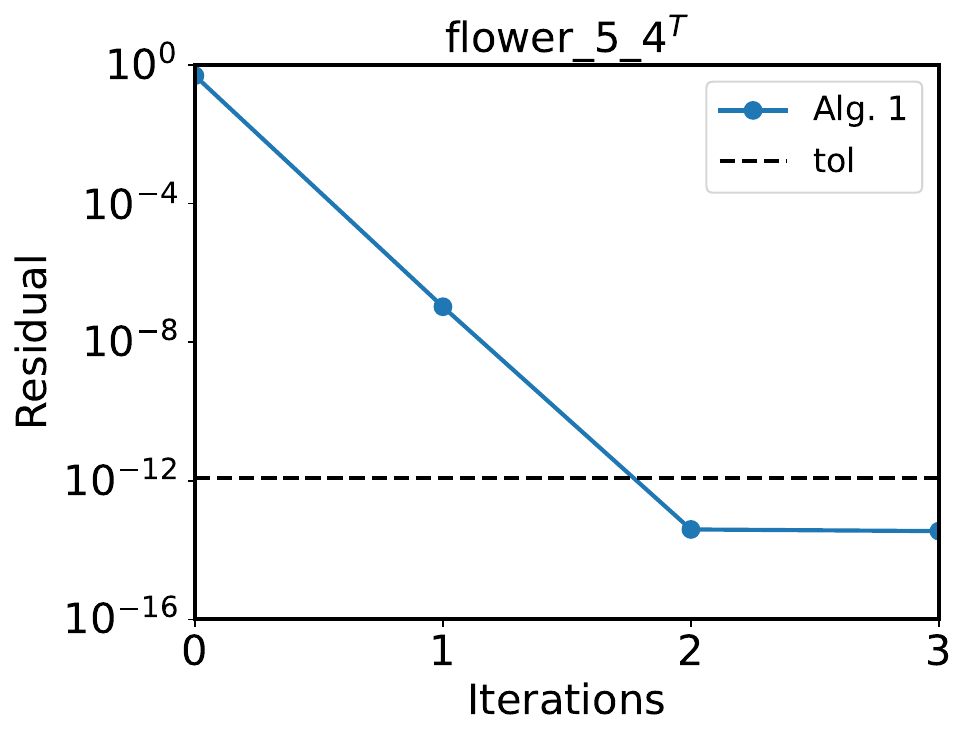} &
\includegraphics[width=0.3\textwidth]{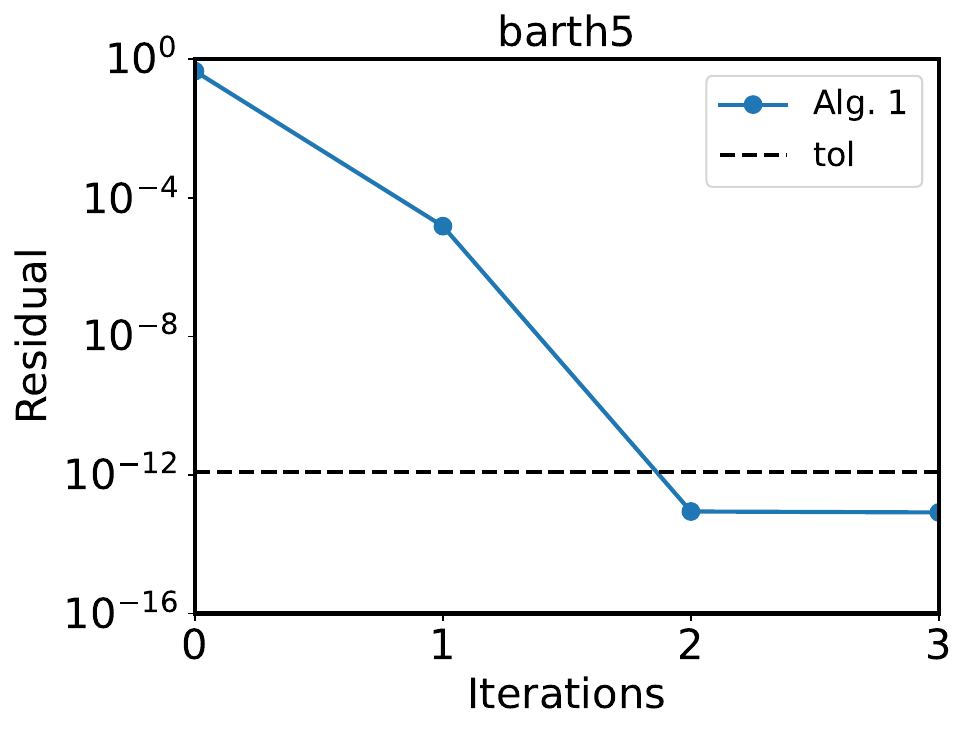} \\
\includegraphics[width=0.3\textwidth]{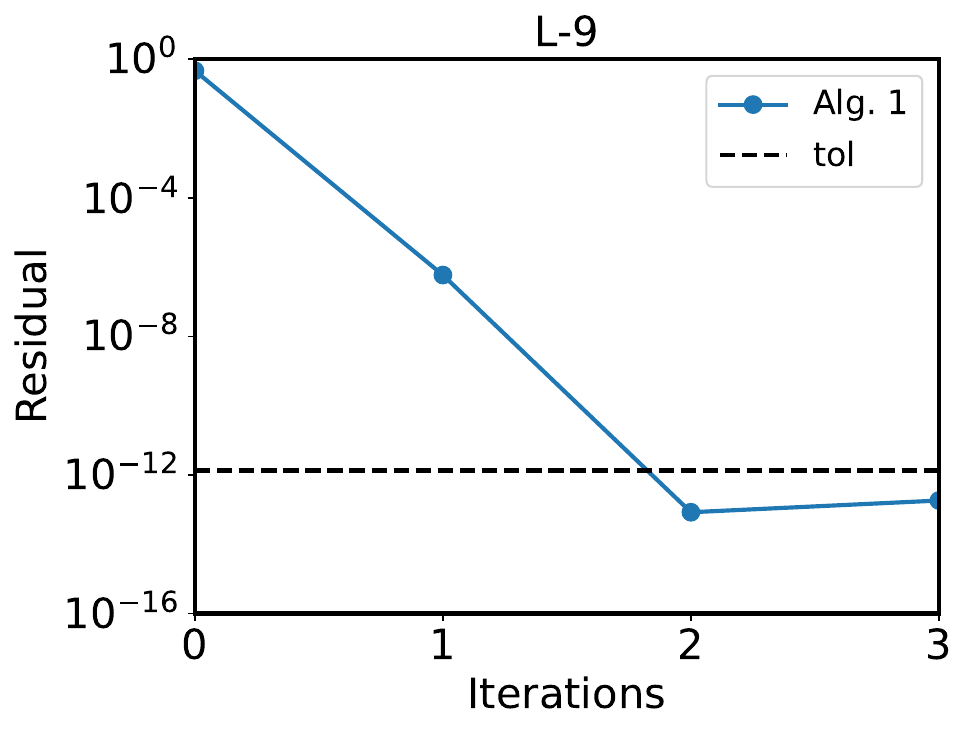} &
\includegraphics[width=0.3\textwidth]{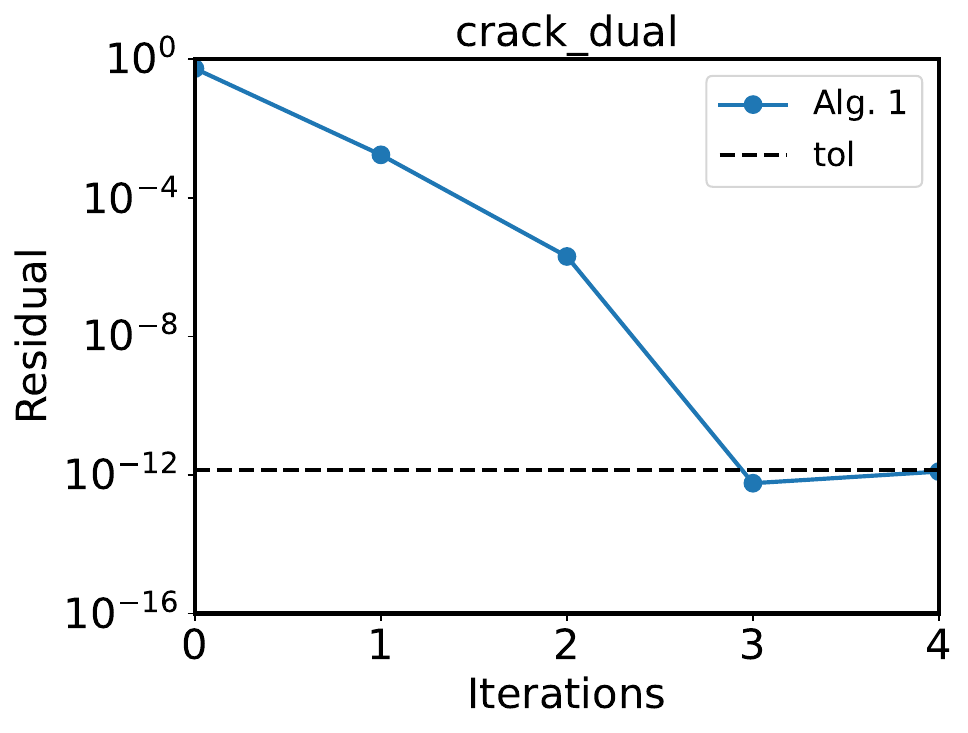} &
\includegraphics[width=0.3\textwidth]{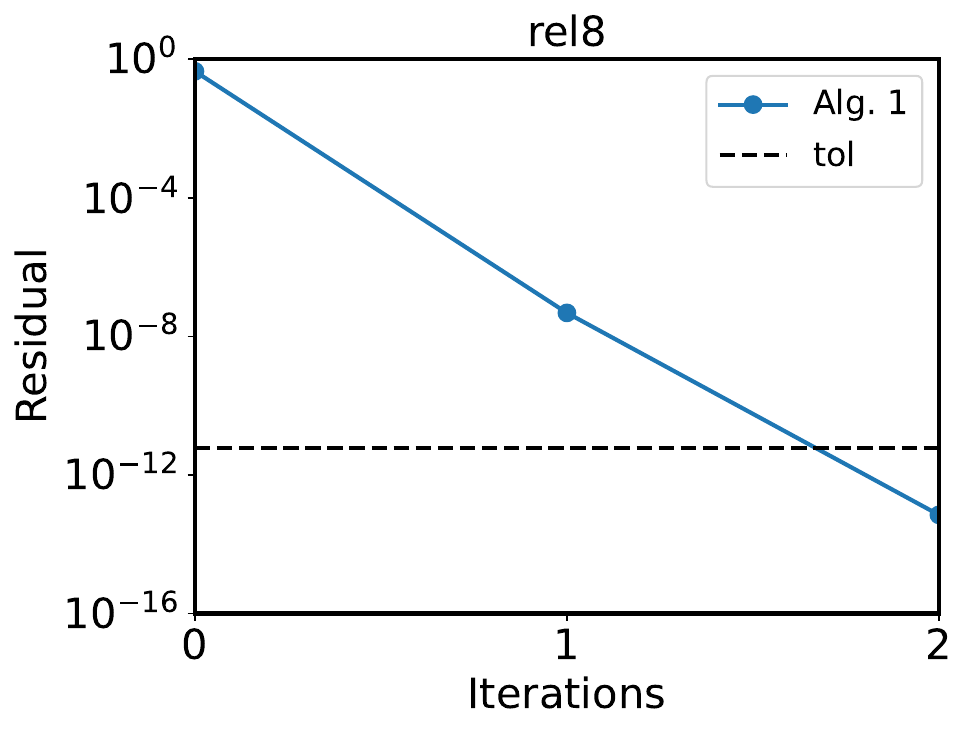}
\end{tabular}
\caption{Convergence history for SVD experiments.}
\label{fig:performance_svd}
\end{figure}

For GSVD, the behavior of Algorithm~\ref{alg:FEAST-GSVD} is similar.
Figure~\ref{fig:performance_gsvd} shows the convergence history.
Algorithm~\ref{alg:FEAST-GSVD} successfully finds all generalized singular
values in the desired interval within three or four iterations for our
examples.

\begin{figure}[tb!]
\centering
\begin{tabular}{ccc}
\includegraphics[width=0.3\textwidth]{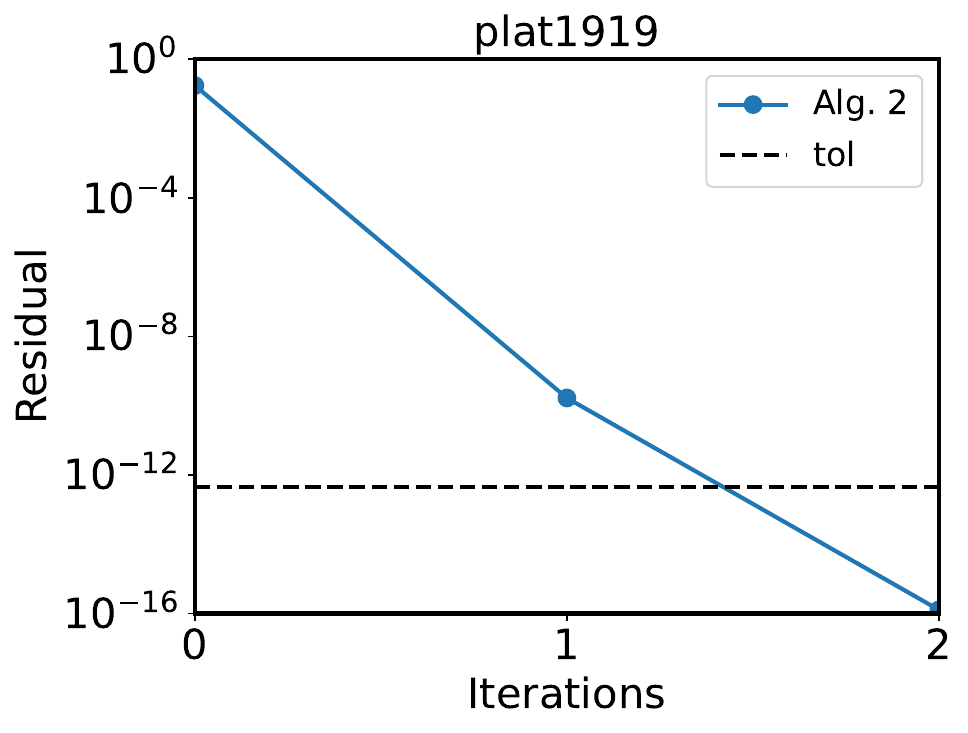} &
\includegraphics[width=0.3\textwidth]{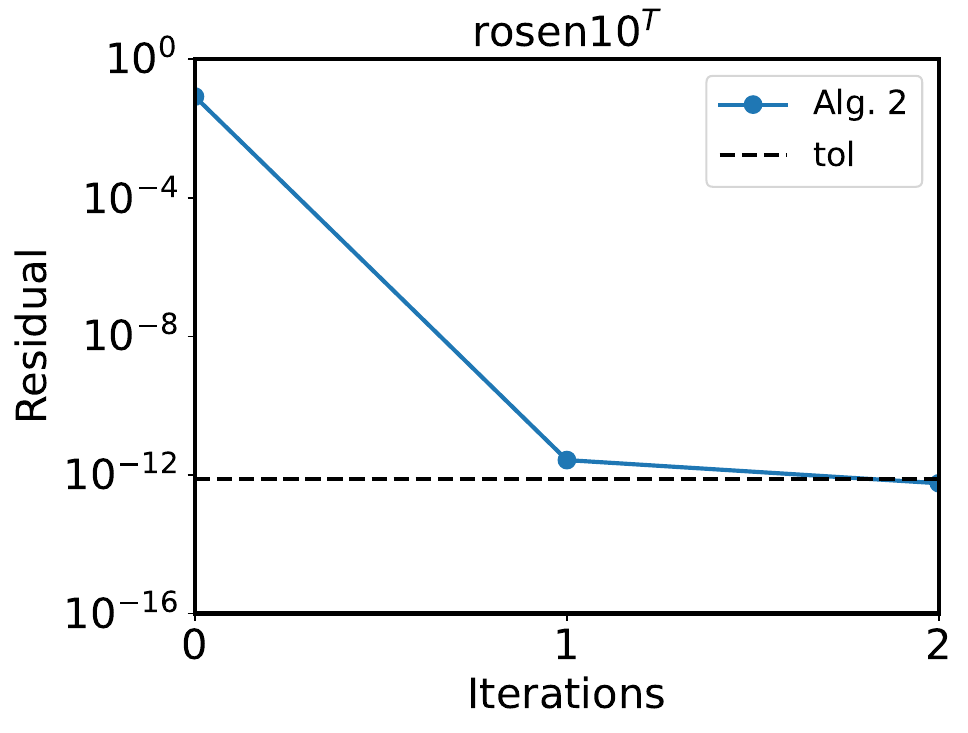} &
\includegraphics[width=0.3\textwidth]{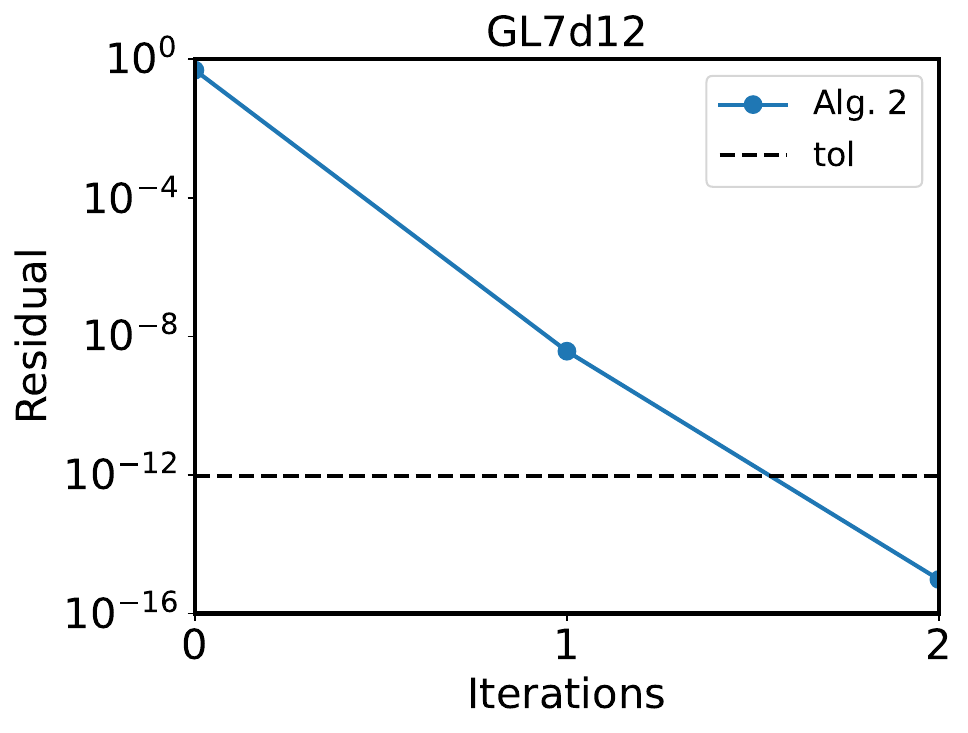} \\
\includegraphics[width=0.3\textwidth]{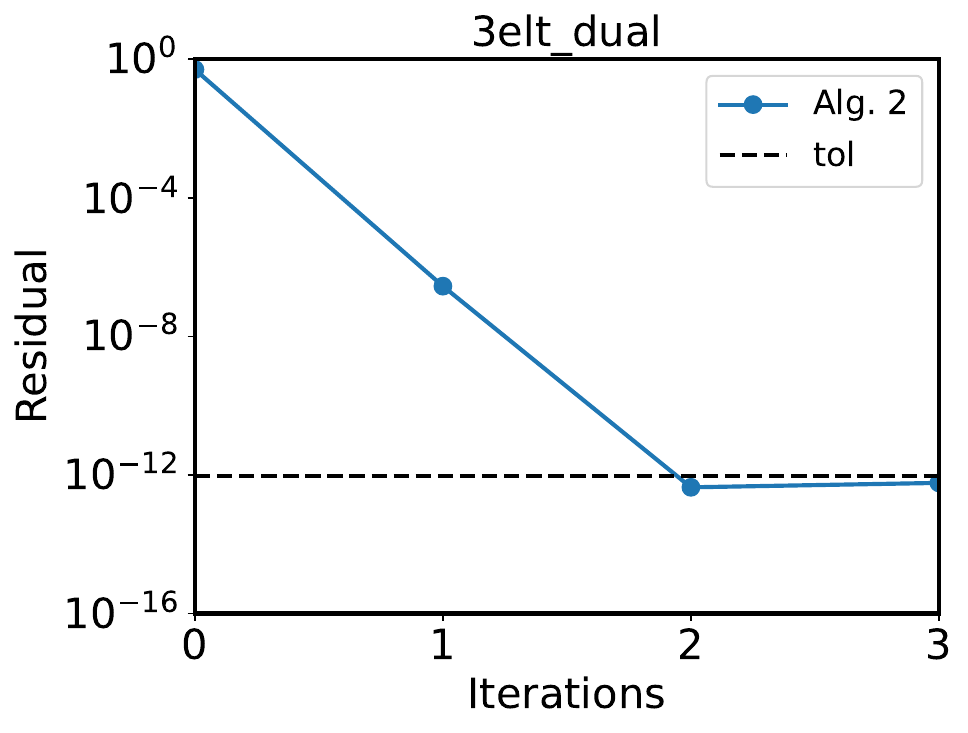} &
\includegraphics[width=0.3\textwidth]{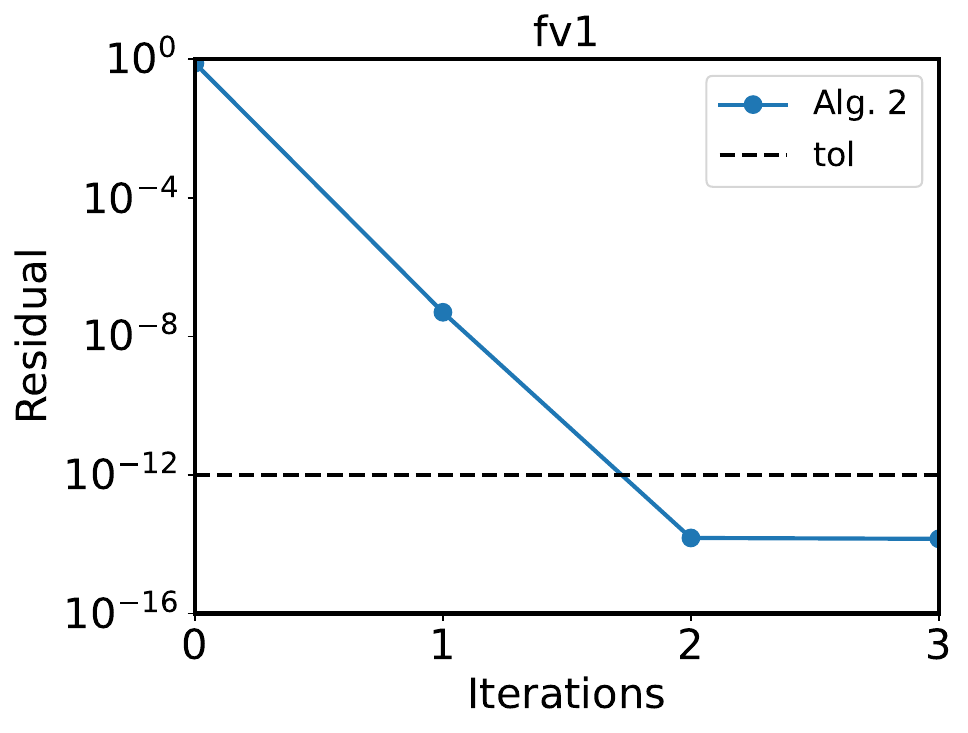} &
\includegraphics[width=0.3\textwidth]{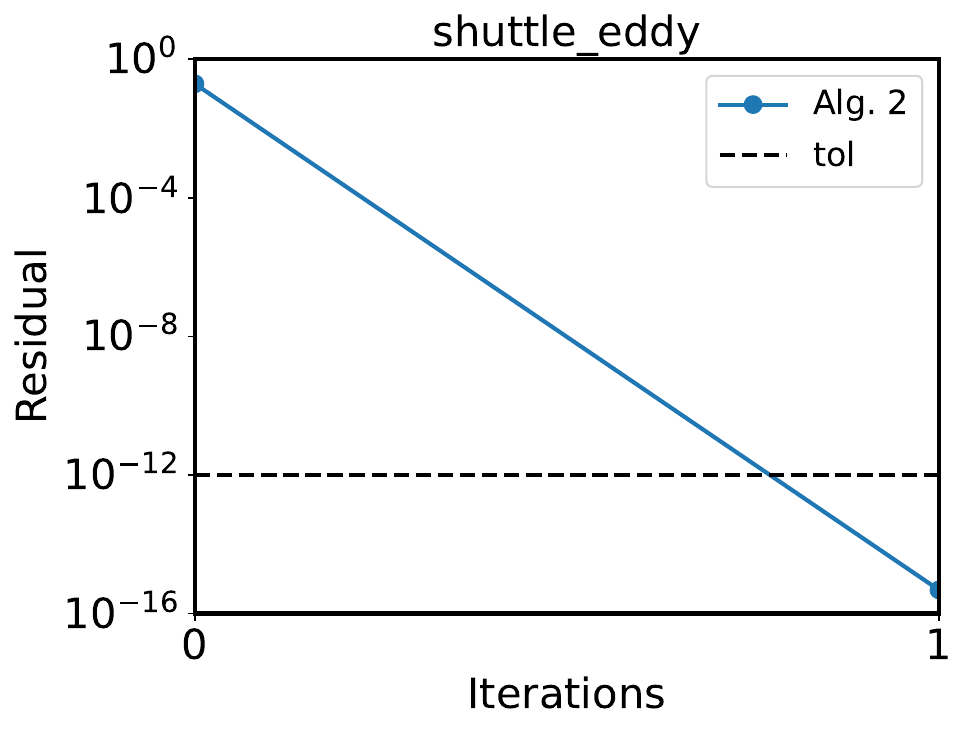} \\
\includegraphics[width=0.3\textwidth]{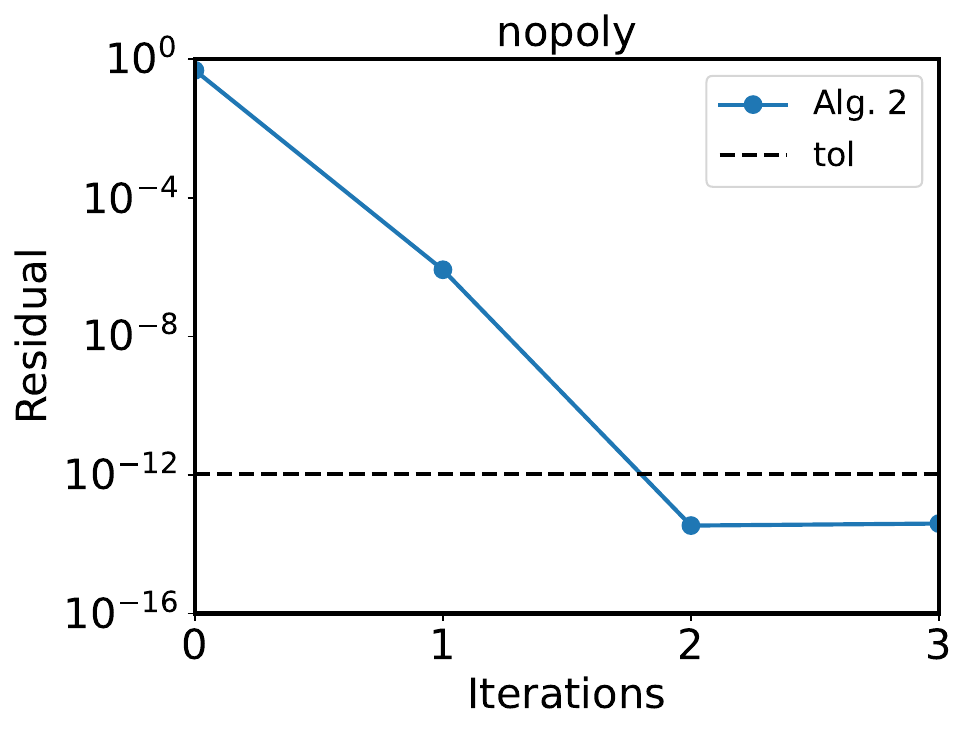} &
\includegraphics[width=0.3\textwidth]{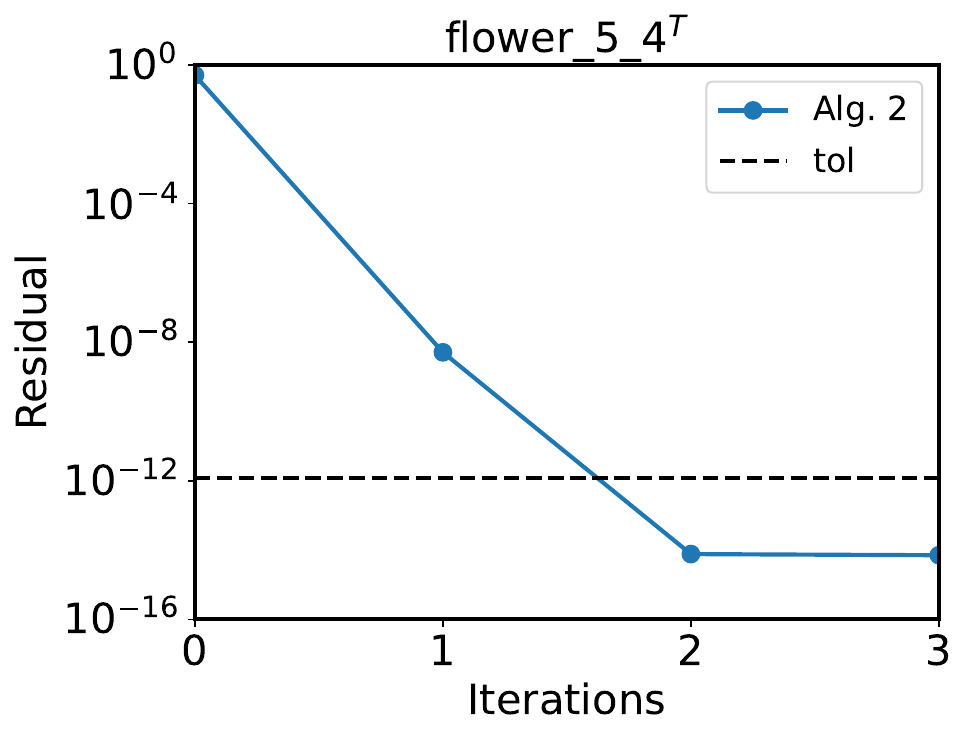} &
\includegraphics[width=0.3\textwidth]{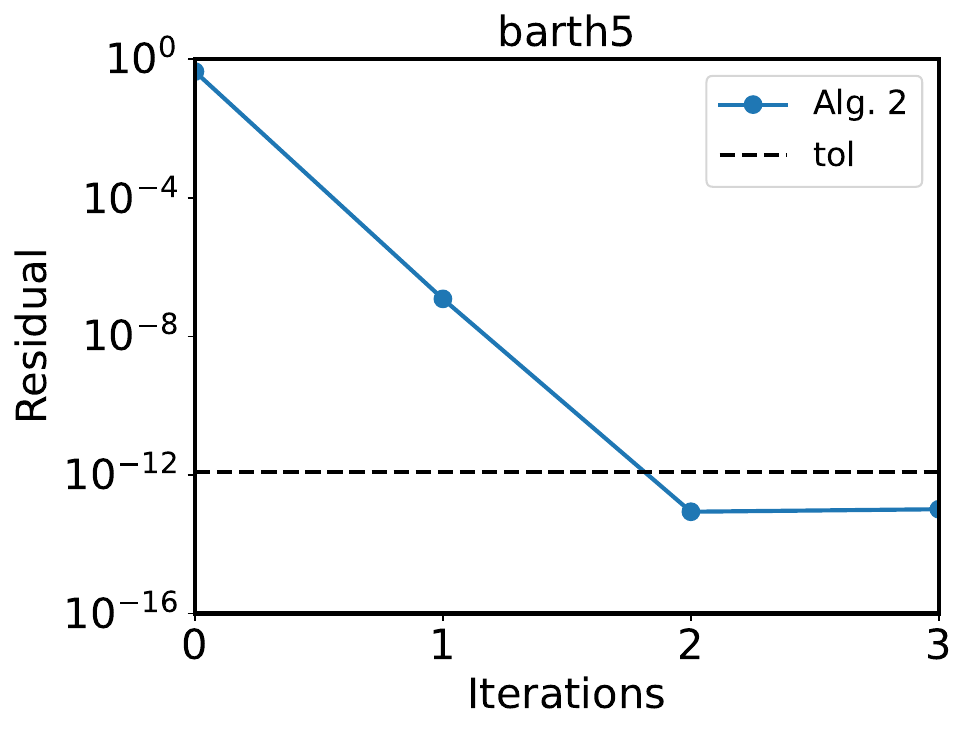} \\
\includegraphics[width=0.3\textwidth]{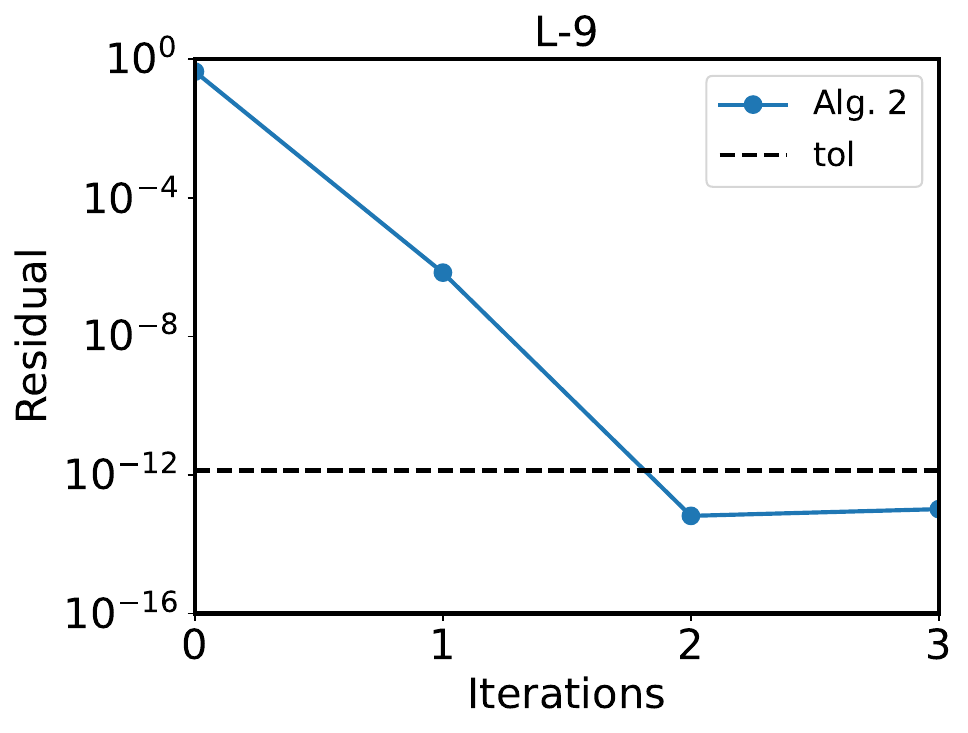} &
\includegraphics[width=0.3\textwidth]{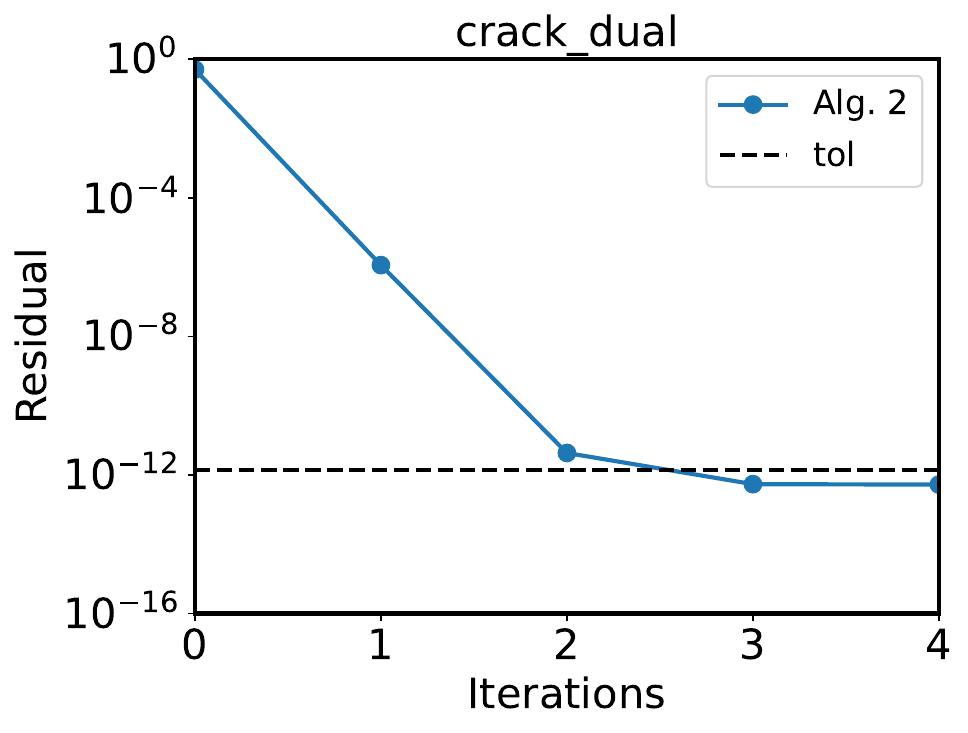} &
\includegraphics[width=0.3\textwidth]{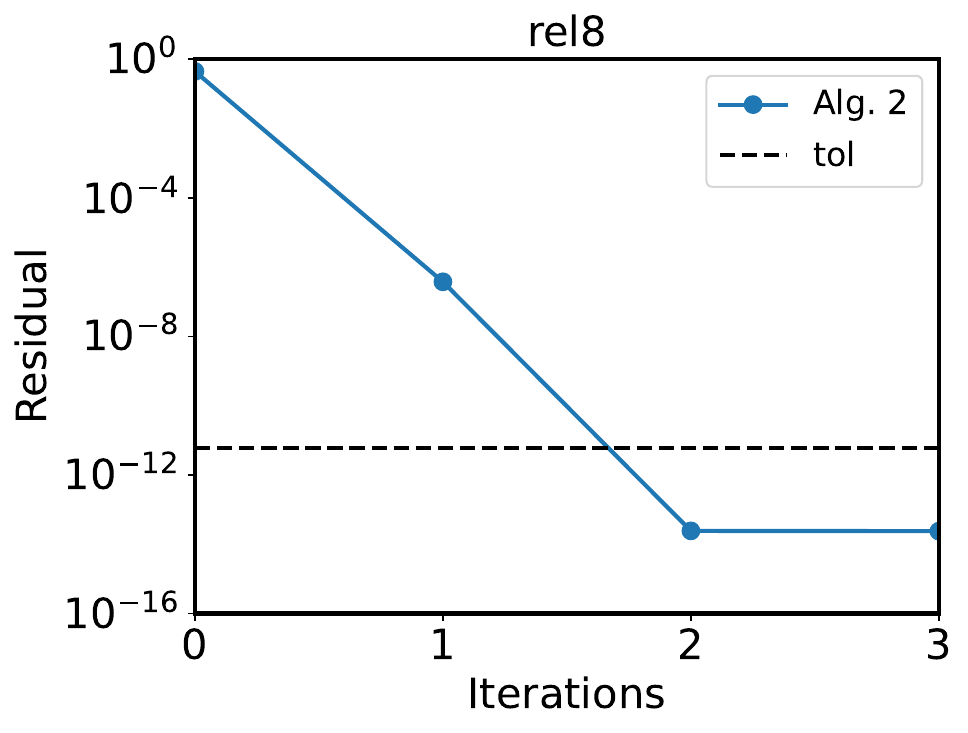}
\end{tabular}
\caption{Convergence history for GSVD experiments.}
\label{fig:performance_gsvd}
\end{figure}

\subsection{Comparison with Jacobi--Davidson and subspace iteration}
To illustrate the effectiveness and efficiency of our algorithm,
we test our examples with two frequently used general-purpose interior
eigensolvers---a cross-product free Jacobi--Davidson (JD)~\cite{HJ2023} and
the shift-and-invert-based subspace iteration (SI) algorithm.
All algorithms use the same randomly generated initial guess from an
\(\ell\)-dimensional subspace.
For the Jacobi--Davidson method, we set \(k_{\min}=5\), \(k_{\max}=50\),
\({\rm fixtol}=10^{-4}\), and \(\tilde\epsilon=10^{-3}\);
see~\cite{HJ2023} for a detailed explanation of these parameters.
The inner iteration for solving the correction equation is solved by GMRES with
the shift-and-invert preconditioner, and the maximum iteration count is set to
\(20\).
For subspace iteration, \(\tau=(\alpha+\beta)/2\) is chosen as the shift.
More precisely, we apply subspace iteration to the matrix
\((\check A-\tau\check B)^{-1}\).

In Tables~\ref{tab:conv_num_SVD} and~\ref{tab:conv_num_GSVD}, we list the
number of converged (generalized) singular values as well as the execution
time for JD and SI.
Execution time of Algorithm~\ref{alg:FEAST-SVD} or~\ref{alg:FEAST-GSVD} is
also listed for reference.
To avoid abuse of computational resources, we have set the time limit for each
test to half an hour.
Within this time limit, JD and SI only solve relatively small problems,
especially in the GSVD setting.
For all test cases, Algorithm~\ref{alg:FEAST-SVD} or~\ref{alg:FEAST-GSVD} is
much faster compared to JD and SI, mainly because
Algorithm~\ref{alg:FEAST-SVD} or~\ref{alg:FEAST-GSVD} converges very rapidly.

\begin{table}[tb!]
\centering
\caption{The number of converged singular values computed by the
cross-product free Jacobi--Davidson algorithm and the shift-and-invert-based subspace 
iteration algorithm, together with the corresponding execution time (sec).
Numbers in boldface mean that all singular values have
converged within the time limit---half an hour.}
\label{tab:conv_num_SVD}
\resizebox{\textwidth}{!}{
\begin{NiceTabular}{cccccccc}
\hline
\multicolumn{2}{c}{ID}   & 1  &  2 & 3  & 4   & 5  & 6\\
\hline
\multirow{2}*{Alg.~\ref{alg:FEAST-SVD}}  & desired & 8  & 12 & 17 & 368 & 89 & 6\\
                  &time        & 1.54 & 1.05 & 6.14 & 12.64 & 9.63 & 2.16 \\
\hdottedline
\multirow{3}*{JD} &converged   & \textbf{8} & \textbf{12} & \textbf{17}
                               &\textbf{368}& \textbf{89} & \textbf{6}\\
                  &iterations  & 36    & 58   & 93   & 4248 & 528   & 25\\
                  &time        & 2.49  & 1.43 & 8.19 & 540  & 114.3 & 4.64\\
\hdottedline
\multirow{3}*{SI} &converged   & \textbf{8} & \textbf{12} & \textbf{17}
                               & 324 & \textbf{89} & \textbf{6}\\
                  &iterations  & 29    & 34   & 77  & 529 & 90  & 9\\
                  &time        & 3.39  & 3.88 & 26.09 & \(1800^{+}\)  & 171.8 & 3.55\\         
\hline
\hline
\multicolumn{2}{c}{ID}    & 7    & 8   &  9   & 10   & 11  & 12\\
\hline
\multirow{2}*{Alg.~\ref{alg:FEAST-SVD}}  &desired & 340 & 137 & 384 & 477 & 330 & 13\\
                 &time        & 18.28 & 47.32 & 22.06 
                              & 35.04 & 31.46 & 659.6\\
\hdottedline
\multirow{3}*{JD}&converged   & \textbf{340} & \textbf{137} & \textbf{384}
                              & 429 &\textbf{330} & 5\\
                 &iterations  & 3326  & 1120 & 4022 & 5120 & 3871 & 25\\
                 &time        & 634.9 & 1647 & 997  & \(1800^{+}\)
                              & 1301 & \(1800^{+}\)\\
\hdottedline
\multirow{3}*{SI}&converged   & 155  & \textbf{137} & 169  & 225  & 144 & 11\\
                 &iterations  & 207  & 159  & 168   & 115  & 168  & 43\\
                 &time        & \(1800^{+}\) & 1245 & \(1800^{+}\)
                              & \(1800^{+}\) & \(1800^{+}\) & \(1800^{+}\)\\
\hline
\end{NiceTabular}}
\end{table}

\begin{table}[tb!]
\centering
\caption{The number of converged generalized singular values computed by the
cross-product free Jacobi--Davidson algorithm and the shift-and-invert-based subspace 
iteration algorithm, together with the corresponding execution time (sec).
Numbers in boldface mean that all generalized singular values have
converged within the time limit---half an hour.}
\label{tab:conv_num_GSVD}
\resizebox{\textwidth}{!}{
\begin{NiceTabular}{cccccccc}
\hline
\multicolumn{2}{c}{ID}            & 1  &  2 & 3  & 4   & 5  & 6\\
\hline
\multirow{2}*{Alg.~\ref{alg:FEAST-GSVD}} &desired & 5     & 17 & 19  & 251 & 56 & 2 \\
                         &time    & 1.66  & 1.31  & 3.89 & 17.48 & 9.06  & 2.42\\
\hdottedline
\multirow{3}*{JD}        &converged  & \textbf{5}    & \textbf{17} & \textbf{19}
                                     & \textbf{251}  & \textbf{56} & \textbf{2}\\
                         &iterations & 37  & 102  & 140   &3308  &574   & 15\\
                         &time       &2.89 & 3.07 & 15.58 & 1256 &202.5 &4.97\\
\hdottedline
\multirow{3}*{SI}       &converged   & \textbf{5} & \textbf{17} & \textbf{19}
                                     & 214        & \textbf{56}    & \textbf{2}\\
                        &iterations  & 38  & 45   & 39  &488  &180 & 10\\
                        &time        &3.47 & 8.11 & 15.57 & \(1800^{+}\) &215.1 &2.76\\
\hline
\hline
\multicolumn{2}{c}{ID}    & 7    & 8   &  9   & 10   & 11  & 12\\
\hline
\multirow{2}*{Alg.~\ref{alg:FEAST-GSVD}}  &desired  & 159  & 51   & 317  & 607 & 602 & 185\\
                       &time         & 19.31  & 43.24 & 39.68
                                     & 58.69  & 78.41 & 1283\\
\hdottedline
\multirow{3}*{JD}      &converged     & \textbf{159} &\textbf{51} & 200 & 155 & 192 & 4\\
                       &iterations    & 1959 &483 & 2457 & 2032 &2379  & 23\\
                       &time          & 706.3 & 906.6 & \(1800^{+}\)
                                      & \(1800^{+}\) & \(1800^{+}\) & \(1800^{+}\)\\
\hdottedline
\multirow{3}*{SI}      &converged     & 104  &\textbf{51}& 142 & 201 & 218 & 2\\
                       &iterations    & 386 & 251  & 141 & 67  &60   & 5\\
                       &time          & \(1800^{+}\) & 977.5   & \(1800^{+}\)
                                      & \(1800^{+}\) & \(1800^{+}\) & \(1800^{+}\)\\
\hline
\end{NiceTabular}}
\end{table}

\subsection{Comparison on spectral projectors}
In the following we compare several choices of spectral projectors discussed
in Section~\ref{subsec:SVD-projector}.
We apply four different filters, \(P^+\), \(P_{\RR}^+\), \(P^++P^-\) and
\(P^+\&P^-\), as explained in Table~\ref{tab:projector}, to compute the GSVD
of {\tt{GL7d12}}.
The naive FEAST algorithm applied to the Jordan--Wielandt matrix is also
tested for comparison.
In order to make the difference easily visible, we replace the ellipse with a
circle and reduce the number of quadrature nodes to \(N=8\) in this test to
slow down convergence.

\begin{table}[!tb]
\centering
\caption{Four choices of spectral projectors.}
\label{tab:projector}
\begin{tabular}{cl}
\hline
Symbol & \multicolumn{1}{c}{Meaning} \\
\hline
\(P^+\) & the simple projector on \(\Gamma^+\)
(see Section~\ref{subsubsec:P^+}) \\
\(P_{\RR}^+\) & \(P^+\) with a Rayleigh--Ritz projection
(see Section~\ref{subsubsec:P^+_RR}) \\
\(P^++P^-\) & the projector combining \(\Gamma^+\) and \(\Gamma^-\)
(see Section~\ref{subsubsec:P^++P^-}) \\
\(P^+\&P^-\) & the augmented projector combining \(\Gamma^+\) and \(\Gamma^-\)
(see Section~\ref{subsubsec:P^+andP^-}) \\
\hline
\end{tabular}
\end{table}

With a randomly generated initial guess, \(P^+\&P^-\) demonstrates the best
convergence rate, and the other three behave similarly;
see Figure~\ref{fig:4filters}(a).
This is consistent with our discussions in Section~\ref{subsec:SVD-projector}.
In this case both \(P_{\RR}^+\) and \(P^++P^-\) are slightly more expensive
compared to~\(P^+\).
The naive FEAST algorithm is about twice slower than \(P^+\) because the naive
FEAST algorithm is computationally more expensive.

\begin{figure}[tb!]
\centering
\begin{tabular}{cc}
\includegraphics[width=0.4\textwidth]{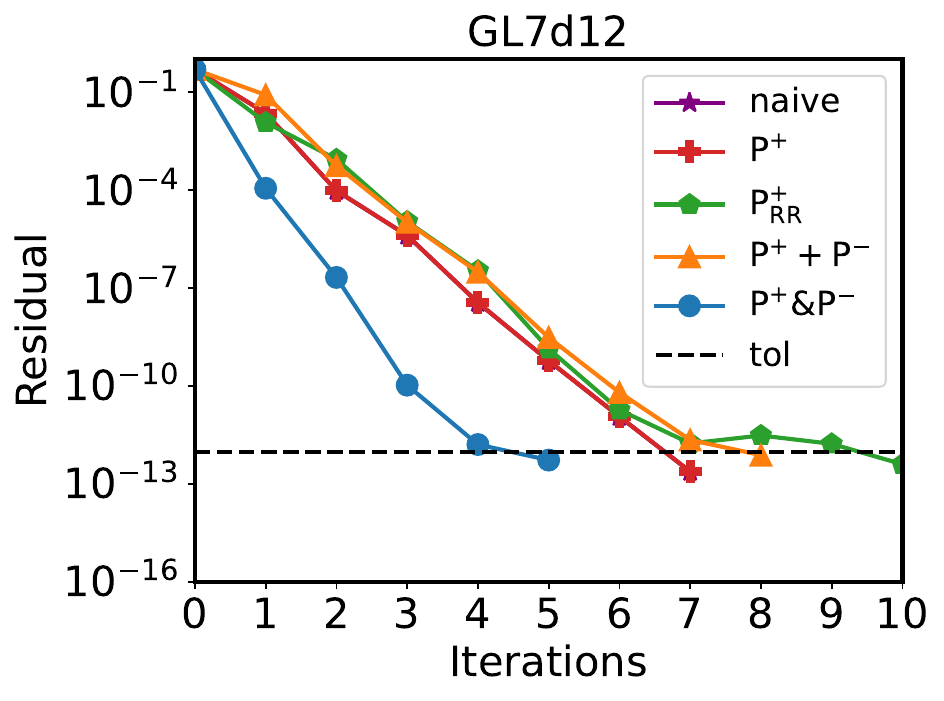} &
\includegraphics[width=0.4\textwidth]{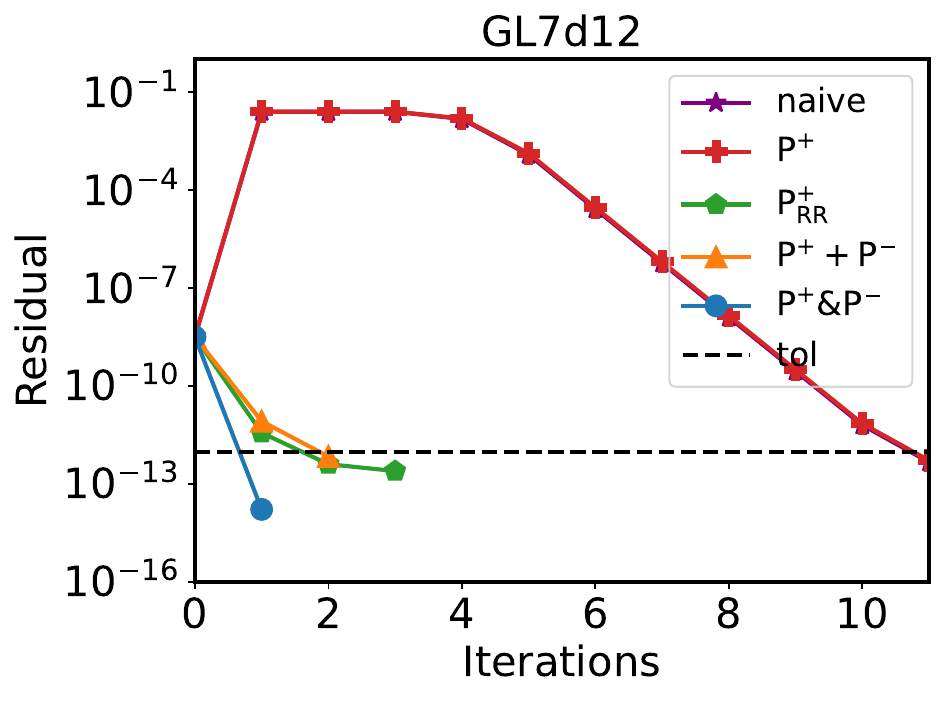}\\
\includegraphics[width=0.4\textwidth]{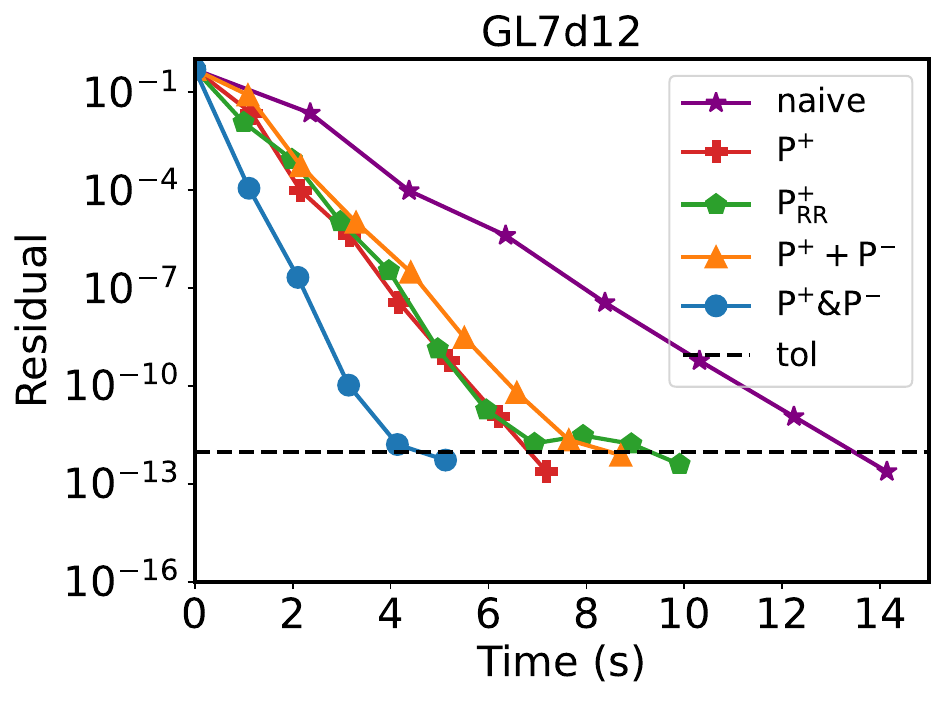} &
\includegraphics[width=0.4\textwidth]{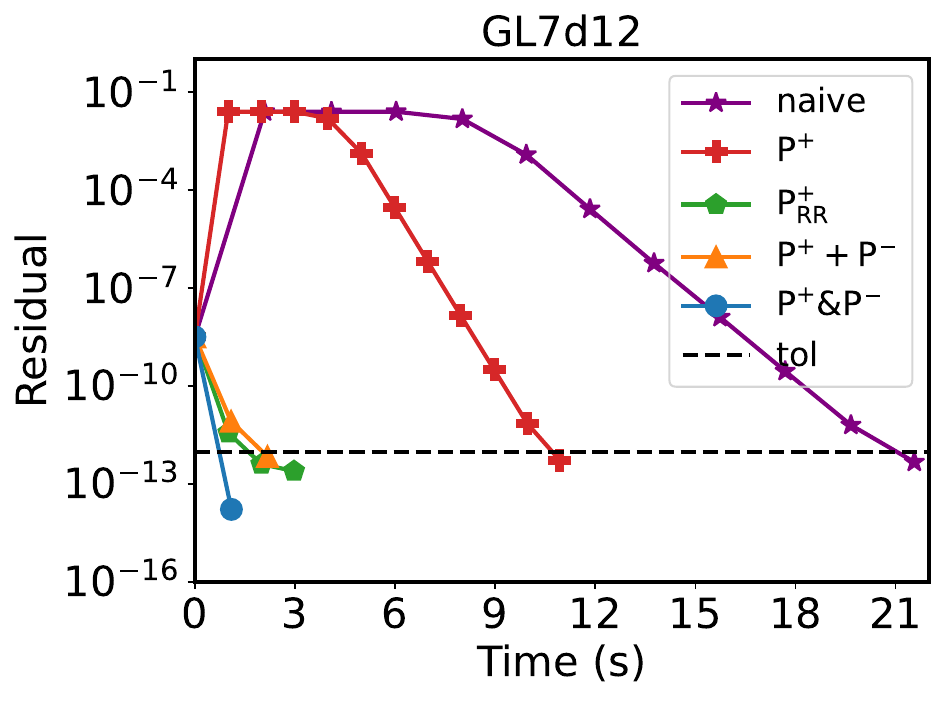}\\
(a) & (b) 
\end{tabular}
\caption{Comparison on different spectral projectors applied to compute the
GSVD of {\tt{GL7d12}}:\\
(a) using a random initial guess;
(b) using the artificial initial guess~\eqref{eq:artificial1}.}
\label{fig:4filters}
\end{figure}

We also construct an artificial initial guess
\begin{equation}
\label{eq:artificial1}
\left[\bmat{U\\-W},Q_{(m+n)\times(\ell-k)}\right]
+10^{-12}\sqrt{m}\cdot Q_{(m+n)\times\ell},
\end{equation}
where \([U\herm,W\herm]\herm\) is the desired solution, and the \(Q\)'s are
randomly generated matrices with orthonormal columns.
This initial guess already contains useful information of the true solution.
Figure~\ref{fig:4filters}(b) illustrates that \(P^+\) behaves poorly
if~\eqref{eq:artificial1} is used.
This example also supports our preference on \(P^+\&P^-\).

In Section~\ref{subsubsec:P^+andP^-}, we have mentioned that \(P^+\&P^-\) can
preserve a higher convergence rate for several steps even if the trial
subspace is only augmented in the first iteration.
Such an observation is also illustrated in Figure~\ref{fig:4filters}(a) ---
the quick convergence of its first iteration is inherited by a few subsequent
iterations.
Extra cost by augmenting the trial subspace in the first iteration is
compensated by rapid convergence.

\subsection{Iterative refinement}
We have seen that Algorithms~\ref{alg:FEAST-SVD} and~\ref{alg:FEAST-GSVD}
typically converge very rapidly.
These algorithms can naturally be adopted to refine low-precision solutions
produced by other eigensolvers.
To illustrate this, we generate a few artificial initial guesses of the form
\begin{equation}
\label{eq:artificial3}
\left[\bmat{U\\W},Q_{(m+n)\times(\ell-k)}\right]\cdot Q_{\ell\times\ell}
+10^{-q}\sqrt{m}\cdot Q_{(m+n)\times\ell},
\end{equation}
where \([U\herm,W\herm]\herm\) is the desired solution, and the \(Q\)'s are
randomly generated matrices with orthonormal columns.
The parameter \(q\) is chosen from \(\set{2,4,6,8,10,12}\) to control the
quality of the initial guess.

We choose the matrix \texttt{plat1919} and set the desired interval to
\((10^{-4},10^{-3})\), which contains~\(52\) generalized singular values.
This problem becomes relatively ill-conditioned as the desired generalized
singular values are small.
From Figure~\ref{fig:refinement}(a), we see that
Algorithm~\ref{alg:FEAST-GSVD} can refine the solution in two iterations
when \(q\geq6\).
In fact, it is possible to skip the second iteration because in this setting
the number of desired generalized singular values, \(\kGSVD\), is already
known, so that the additional stopping criterion in
Section~\ref{subsec:setting} can be avoided.
It is also possible to simply use the simple spectral projector \(P^+\) and
skip trace estimation, as the quality of the initial guess is known to be
good.

A more practical setting is to use the solution of MATLAB's
\({\tt{eigs}}(A\herm A, B\herm B)\) as the initial guess.
We see from Figure~\ref{fig:refinement}(b) that the accuracy is improved from
\(10^{-7}\) to \(10^{-13}\) in a single iteration.

\begin{figure}[tb!]
\centering
\begin{tabular}{cc}
\includegraphics[width=0.4\textwidth]{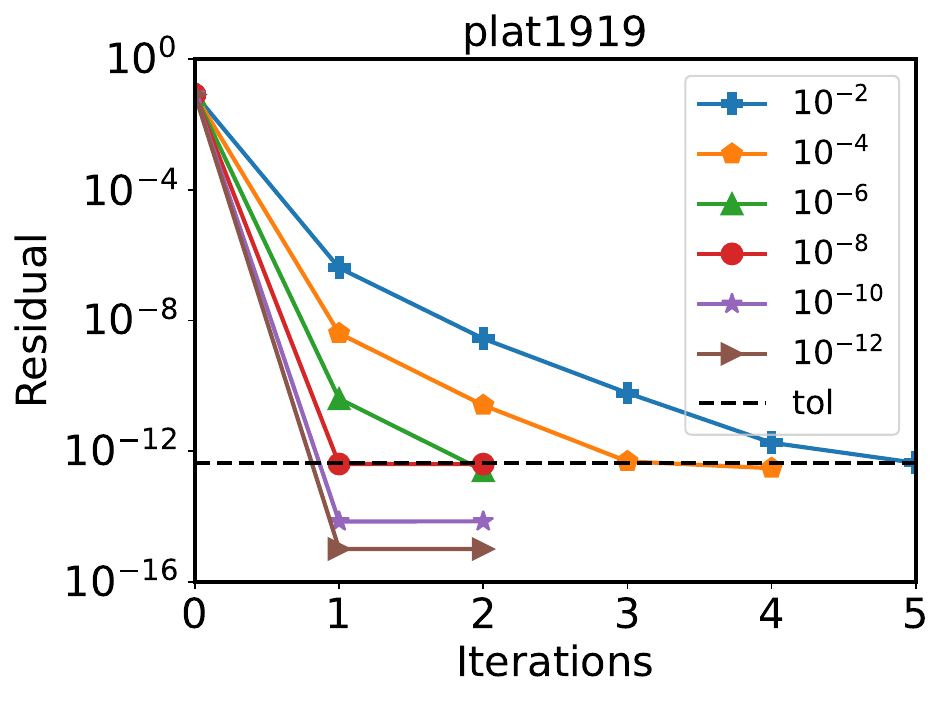} &
\includegraphics[width=0.4\textwidth]{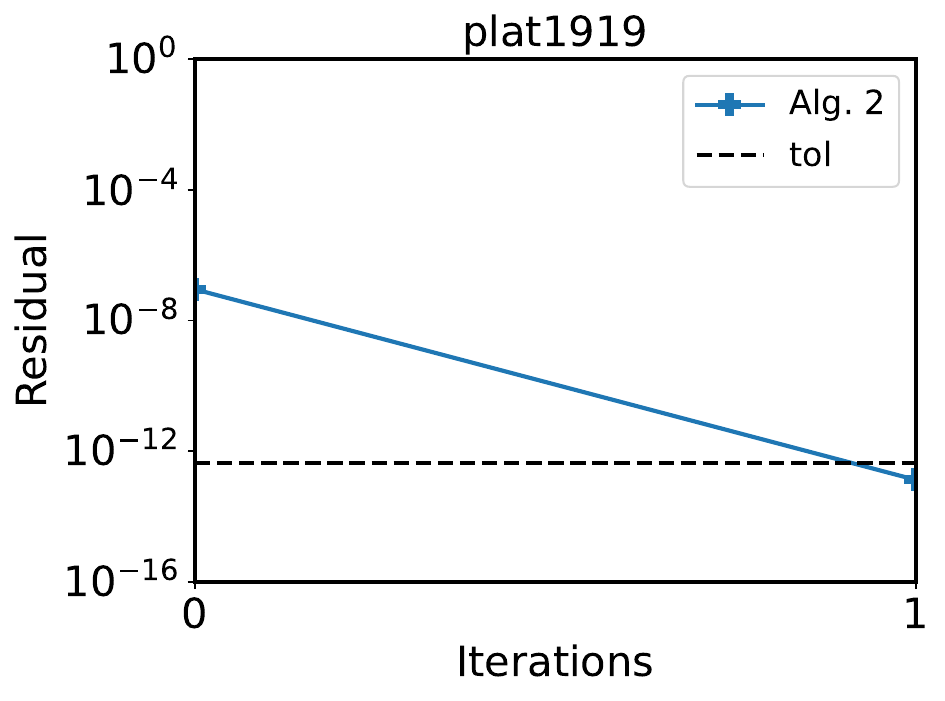}\\
(a) & (b)
\end{tabular}
\caption{Compute the generalized singular values of {\tt{plat1919}}
in the interval \((10^{-4},10^{-3})\) with low-precision initial guesses
by Algorithm~\ref{alg:FEAST-GSVD}:
(a) using artificial initial guesses~\eqref{eq:artificial3};
(b) using MATLAB's \({\tt{eigs}}(A\herm A, B\herm B)\) as the initial guess.}
\label{fig:refinement}
\end{figure}

\section{Conclusions}
\label{sec:conclusions}
In this work we propose a contour integral-based algorithm for computing
partial SVD/GSVD through the Jordan--Wielandt matrix (pencil).
This is a special case of the symmetric eigenvalue problem, targeting interior
eigenvalues.
We analyze four choices of spectral projectors tailored to this problem.
Though all of these choices lead to structure-exploiting algorithms, they may
behave quite differently, partly depending on the initial guess provided by
the user.
We identify one spectral projector that is both robust and effective.
Numerical experiments illustrate that our proposed algorithm 
can compute partial SVD/GSVD efficiently and accurately.

There are several potential usages of our algorithm.
When a large number of (generalized) singular values are of interest, our
algorithm can be incorporated into a spectral slicing framework.
Our algorithm can also be adopted to improve low-precision solutions produced
by other algorithms.
This is a promising feature that can possibly be exploited in modern mixed
precision algorithms.
Development in these directions is planned as our future work.

\appendix
\section{Further Discussions}
In Section~\ref{subsubsec:P^+andP^-}, we mentioned that augmenting the trial
subspace with a pair of contours can often accelerate the convergence of the
FEAST-SVD algorithm (in fact, also for FEAST-GSVD), and, in addition, such an
acceleration can be inherited by subsequent iterations even if the trial
subspace is only augmented in the first iteration.
In the following, we provide a brief explanation for such a fast convergence.

Let us assume that in the generic case
\[
\bmat{\tilde U \\ \tilde W}
=\frac{1}{\sqrt2}\bmat{U_\Inner & U_\Outer & U_\Inner & U_\Outer \\
W_\Inner & W_\Outer & -W_\Inner & -W_\Outer}
\bmat{C_\Inner^+ \\ C_\Outer^+ \\ C_\Inner^- \\ C_\Outer^-}
+\bmat{U_\NULL \\ 0}C_\NULL,
\]
and
\[
\tilde P^+(\check A)\bmat{\tilde U \\ \tilde W}
=\frac{1}{\sqrt2}\bmat{U_\Inner & U_\Outer & U_\Inner & U_\Outer\\
W_\Inner & W_\Outer & -W_\Inner & -W_\Outer}
\bmat{C_1 \\ C_2 \\ C_3 \\ C_4}
+\bmat{U_\NULL \\ 0}C_0,
\]
where \(C_1\approx C_\Inner^+\),
\(\max\set{\lVert C_0\rVert_2,\lVert C_2\rVert_2,\lVert C_3\rVert_2,
\lVert C_4\rVert_2}=O(\epsilon)\).
Note that
\[
\tilde P^+(\check A)\bmat{\tilde U \\ -\tilde W}\\
=\frac{1}{\sqrt2}\bmat{U_\Inner & U_\Outer & U_\Inner & U_\Outer\\
W_\Inner & W_\Outer & -W_\Inner & -W_\Outer}
\bmat{D_1 \\ D_2 \\ D_3 \\ D_4}
+\bmat{U_\NULL \\ 0}C_0,
\]
where \(D_1\approx C_\Inner^-\),
\(\max\set{\lVert D_2\rVert_2,\lVert D_3\rVert_2,\lVert D_4\rVert_2}
=O(\epsilon)\).
Then after one step of Algorithm~\ref{alg:FEAST-SVD} we obtain the coefficient
matrix
\begin{equation}
\label{eq:1-step}
\bmat{C_1 & D_1 \\ C_2 & D_2 \\ C_3 & D_3 \\ C_4 & D_4 \\ C_0 & C_0}
\approx\bmat{C_\Inner^+ & C_\Inner^- \\ O(\epsilon) & O(\epsilon) \\
O(\epsilon) & O(\epsilon) \\ O(\epsilon) & O(\epsilon) \\
O(\epsilon) & O(\epsilon)}
\in\mathbb C^{(m+n)\times2\ell}.
\end{equation}
If, instead, we apply the simple spectral projector to a generic initial guess
with \(2\ell\) columns, the corresponding coefficient matrix becomes
\[
\bmat{C_1 \\ C_2 \\ C_3 \\ C_4 \\ C_0}
\approx\bmat{C_\Inner^+ \\ O(\epsilon) \\ O(\epsilon) \\ O(\epsilon) \\
O(\epsilon)}
\in\mathbb C^{(m+n)\times2\ell},
\]
which is about equally good compared to~\eqref{eq:1-step}.
Since the FEAST algorithm is essentially subspace iteration on the
spectral projector, it remains explaining why one step of subspace iteration
with an augmented initial guess can accelerate subsequent iterations.

In the rest of this section, we consider an \(n\times n\) Hermitian matrix
\(A\) whose eigenvalues \(\lambda_1\), \(\dotsc\), \(\lambda_n\) are ordered
such that
\[
\lvert\lambda_{1}\rvert\geq\lvert\lambda_{2}\rvert\geq\cdots
\geq\lvert\lambda_{k}\rvert>\lvert\lambda_{k+1}\rvert\geq\cdots
\geq\lvert\lambda_{\ell}\rvert>\lvert\lambda_{\ell+1}\rvert\geq\cdots\geq
\lvert\lambda_{n}\rvert.
\]
Let \(U\) represent the matrix containing  eigenvectors
\(u_1\), \(\dotsc\), \(u_n\).
The eigenvalues of interest are \(\lambda_1\), \(\dotsc\), \(\lambda_k\).
The initial guess contains \(\ell\) columns.
We assume that
\begin{equation}
\label{eq:gap}
\lvert\lambda_1\rvert=\Theta(1),
\qquad \lvert\lambda_k\rvert=\Theta(1),
\qquad \lvert\lambda_{\ell+1}\rvert=O(\epsilon).
\end{equation}
The spectral projector arising in the FEAST algorithm usually
satisfies~\eqref{eq:gap}.

Convergence analysis of subspace iteration can be found in many textbooks;
see, e.g., \cite{Parlett1998}.
Classical results focus on the asymptotic convergence rate, while we are
interested in convergence at the early stage.
To this end we establish the following technical results.

\begin{lemma}
\label{lem:0}
Let \(M\in\mathbb C^{m\times m}\) be Hermitian and positive semidefinite.
Then
\[
\lVert(I_m+M)^{-1/2}-I_m\rVert_2\leq\frac12\lVert M\rVert_2.
\]
\end{lemma}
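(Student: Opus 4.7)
The plan is to reduce the matrix inequality to a scalar inequality via the spectral theorem, and then verify that scalar inequality with an elementary calculus argument.

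First I would diagonalize $M$. Since $M$ is Hermitian and positive semidefinite, write $M=Q\Lambda Q\herm$ with $Q$ unitary and $\Lambda=\diag\set{\mu_1,\dotsc,\mu_m}$, $\mu_i\geq 0$. The functional calculus then gives
\[
(I_m+M)^{-1/2}-I_m
=Q\bigl((I_m+\Lambda)^{-1/2}-I_m\bigr)Q\herm,
\]
so that by unitary invariance of the spectral norm,
\[
\lVert(I_m+M)^{-1/2}-I_m\rVert_2
=\max_{1\leq i\leq m}\bigl\lvert(1+\mu_i)^{-1/2}-1\bigr\rvert.
\]

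Next I would prove the scalar inequality $\lvert(1+\mu)^{-1/2}-1\rvert\leq\mu/2$ for every $\mu\geq 0$. Since $0<(1+\mu)^{-1/2}\leq1$, the absolute value just equals $1-(1+\mu)^{-1/2}$, so the claim becomes
\[
1-(1+\mu)^{-1/2}\leq\frac{\mu}{2}.
\]
Defining $g(\mu)=\mu/2-1+(1+\mu)^{-1/2}$, I would compute $g(0)=0$ and $g'(\mu)=\tfrac12-\tfrac12(1+\mu)^{-3/2}\geq 0$ for $\mu\geq0$. Hence $g$ is nondecreasing on $[0,\infty)$, so $g(\mu)\geq 0$, which is the desired inequality.

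Combining these two steps gives $\lVert(I_m+M)^{-1/2}-I_m\rVert_2\leq\tfrac12\max_i\mu_i=\tfrac12\lVert M\rVert_2$. The argument is entirely routine; the only conceivable obstacle is a sign check in the scalar reduction, which is handled by noting that $(1+\mu)^{-1/2}\leq 1$ whenever $\mu\geq 0$.
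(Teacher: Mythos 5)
Your proof is correct and follows essentially the same route as the paper: both diagonalize $M$ via the spectral theorem to reduce the claim to the scalar inequality $1-(1+\mu)^{-1/2}\leq\mu/2$ for $\mu\geq0$. The only difference is in how that scalar inequality is verified: the paper rationalizes to obtain the exact identity $1-(1+\mu)^{-1/2}=\mu/\bigl((1+\mu)^{1/2}+(1+\mu)\bigr)$ and then bounds the denominator from below by $2$, whereas you establish the same bound via a monotonicity argument on $g(\mu)=\mu/2-1+(1+\mu)^{-1/2}$; both are routine and equally valid.
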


\begin{proof}
Let \(\mu_1\), \(\mu_2\), \(\dotsc\), \(\mu_m\) be the eigenvalues of \(M\).
Then
\begin{align*}
\lVert(I_m+M)^{-1/2}-I_m\rVert_2
&=\max_{1\leq i\leq m}\bigl\lvert(1+\mu_i)^{-1/2}-1\bigr\rvert \\
&=\max_{1\leq i\leq m}\frac{\mu_i}{(1+\mu_i)^{1/2}+(1+\mu_i)} \\
&\leq\frac12\max_{1\leq i\leq m}\mu_i \\
&=\frac12\lVert M\rVert_2.
\qedhere
\end{align*}
\end{proof}

\begin{theorem}
\label{thm:1}
Let \(A\in\mathbb C^{n\times n}\) be a nonsingular Hermitian matrix with
normalized eigenpairs \((\lambda_1,u_1)\), \((\lambda_2,u_2)\), \(\dotsc\),
\((\lambda_n,u_n)\).
Assume that \(\lvert\lambda_1\rvert\geq\lvert\lambda_2\rvert\geq\dotsb
\geq\lvert\lambda_n\rvert>0\).
A matrix \(X\in\mathbb C^{n\times \ell}\) is of the form
\[
X=[U_\ell,U_\ell^{\perp}]\bmat{X_1\\X_2},
\]
where \(U_\ell=[u_1,\dotsc,u_\ell]\),
\(U_\ell^{\perp}=[u_{\ell+1},\dotsc,u_n]\), and
\(X_1\in\mathbb C^{\ell\times\ell}\) is nonsingular.
Let \(X_3=X_2X_1^{-1}\) be partitioned into \(X_3=[X_{3,1},X_{3,2}]\), where
\(X_{3,1}\in\mathbb C^{(n-\ell)\times k}\).
Then there exists \(Y\in\mathbb C^{n\times\ell}\) such that
\(\Span(Y)=\Span(AX)\), \(Y\herm Y=I_\ell\), and
\[
Y=U_\ell+[U_k,U_{\ell\backslash k},U_\ell^{\perp}]
\begin{bNiceMatrix}[first-row,last-col]
k & \ell-k & \\
E_{1,1} & E_{1,2} & ~k \\
E_{2,1} & E_{2,2} & ~\ell-k \\
F_1 & F_2 & ~n-\ell
\end{bNiceMatrix},
\]
for \(1\leq k\leq\ell\), where \(U_k=[u_1,\dotsc,u_k]\) and
\(U_{\ell\backslash k}=[u_{k+1},\dotsc,u_{\ell}]\).
Define
\[
\tilde\eta=\bigl\lVert\Lambda_\ell^{\perp}X_{3,1}\Lambda_k^{-1}
\bigr\rVert_2,
\qquad
\hat\eta=\bigl\lVert\Lambda_\ell^{\perp}X_{3,2}\Lambda_{\ell\backslash k}^{-1}
\bigr\rVert_2,
\]
where \(\Lambda_k=\diag\set{\lambda_1,\lambda_2,\dotsc,\lambda_k}\),
\(\Lambda_{\ell\backslash k}=\diag\set{\lambda_{k+1},\lambda_{k+2},\dotsc,\lambda_\ell}\) and
\(\Lambda_\ell^{\perp}
=\diag\set{\lambda_{\ell+1},\lambda_{\ell+2},\dotsc,\lambda_n}\).
Then we have
\begin{align*}
\lVert E_{1,1}\rVert_2&\leq\frac12\tilde\eta^2, &
\lVert E_{2,1}\rVert_2&=0, &
\lVert F_{1}\rVert_2&\leq\tilde\eta, \\
\lVert E_{1,2}\rVert_2&\leq\tilde\eta\hat\eta, &
\lVert E_{2,2}\rVert_2&\leq\frac12\hat\eta^2, &
\lVert F_{2}\rVert_2&\leq\hat\eta.
\end{align*}
\end{theorem}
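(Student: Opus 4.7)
The plan is to construct $Y$ explicitly by a two-stage block Gram--Schmidt orthonormalization applied to a convenient factorization of $AX$, and then read off each block bound from the construction. First I would factor
\[
AX=[U_\ell,U_\ell^{\perp}]\bmat{\Lambda_\ell X_1 \\ \Lambda_\ell^{\perp}X_2}
=[U_\ell,U_\ell^{\perp}]B\cdot\Lambda_\ell X_1,
\qquad
B=\bmat{I_\ell \\ \Lambda_\ell^{\perp}X_3\Lambda_\ell^{-1}},
\]
so $\Span(AX)=\Span([U_\ell,U_\ell^{\perp}]B)$ because $\Lambda_\ell X_1$ is nonsingular. Partitioning $B=[B_k,B_{\ell\backslash k}]$ along the column split $X_3=[X_{3,1},X_{3,2}]$ and refining the row split into $k+(\ell-k)+(n-\ell)$ gives
\[
B_k=\bmat{I_k\\0\\\Lambda_\ell^{\perp}X_{3,1}\Lambda_k^{-1}},\qquad
B_{\ell\backslash k}=\bmat{0\\I_{\ell-k}\\\Lambda_\ell^{\perp}X_{3,2}\Lambda_{\ell\backslash k}^{-1}}.
\]

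Next I would orthonormalize $B_k$ by itself: with $M=\Lambda_k^{-1}X_{3,1}\herm(\Lambda_\ell^{\perp})^2 X_{3,1}\Lambda_k^{-1}$, one has $B_k\herm B_k=I_k+M$ with $\lVert M\rVert_2\le\tilde\eta^2$, and $Q_k=B_k(I_k+M)^{-1/2}$ has row blocks $I_k+E_{1,1}$, $0$, $F_1=\Lambda_\ell^{\perp}X_{3,1}\Lambda_k^{-1}(I_k+E_{1,1})$, where $E_{1,1}=(I_k+M)^{-1/2}-I_k$. Lemma~\ref{lem:0} immediately gives $\lVert E_{1,1}\rVert_2\le\tilde\eta^2/2$; the middle zero block delivers $E_{2,1}=0$ for free; and $\lVert F_1\rVert_2\le\tilde\eta$ since $\lVert I_k+E_{1,1}\rVert_2\le1$. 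I would then orthonormalize $B_{\ell\backslash k}$ against $Q_k$ by setting $C_{\ell\backslash k}=(I_n-Q_kQ_k\herm)B_{\ell\backslash k}$ and $Q_{\ell\backslash k}=C_{\ell\backslash k}(I_{\ell-k}+M_{\ell\backslash k})^{-1/2}$ with $M_{\ell\backslash k}=C_{\ell\backslash k}\herm C_{\ell\backslash k}-I_{\ell-k}$. The pivotal step is an algebraic simplification: using the identity $(I_k+E_{1,1})^2+F_1\herm F_1=I_k$ (read off from $Q_k\herm Q_k=I_k$), the cross-terms in $C_{\ell\backslash k}\herm C_{\ell\backslash k}$ collapse via $F_1(I_k-F_1\herm F_1)F_1\herm+(I_{n-\ell}-F_1F_1\herm)^2=I_{n-\ell}-F_1F_1\herm$, yielding
\[
M_{\ell\backslash k}=(\Lambda_\ell^{\perp}X_{3,2}\Lambda_{\ell\backslash k}^{-1})\herm(I_{n-\ell}-F_1F_1\herm)(\Lambda_\ell^{\perp}X_{3,2}\Lambda_{\ell\backslash k}^{-1}),
\]
hence $\lVert M_{\ell\backslash k}\rVert_2\le\hat\eta^2$ and, by Lemma~\ref{lem:0}, $\lVert E_{2,2}\rVert_2\le\hat\eta^2/2$.

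Finally, the three row blocks of $Q_{\ell\backslash k}$ read $E_{1,2}=-(I_k+E_{1,1})F_1\herm\Lambda_\ell^{\perp}X_{3,2}\Lambda_{\ell\backslash k}^{-1}(I_{\ell-k}+E_{2,2})$, $I_{\ell-k}+E_{2,2}$, and $F_2=(I_{n-\ell}-F_1F_1\herm)\Lambda_\ell^{\perp}X_{3,2}\Lambda_{\ell\backslash k}^{-1}(I_{\ell-k}+E_{2,2})$. Since $Q_k$ and $Q_{\ell\backslash k}$ both have orthonormal columns, $\lVert I_k+E_{1,1}\rVert_2\le1$ and $\lVert I_{\ell-k}+E_{2,2}\rVert_2\le1$, so submultiplicativity yields $\lVert E_{1,2}\rVert_2\le\tilde\eta\hat\eta$ and $\lVert F_2\rVert_2\le\hat\eta$. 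Setting $Y=[U_k,U_{\ell\backslash k},U_\ell^{\perp}][Q_k,Q_{\ell\backslash k}]$ produces an $n\times\ell$ matrix with $Y\herm Y=I_\ell$ and $\Span(Y)=\Span(AX)$, whose block decomposition is exactly the asserted one. I expect the main obstacle to be the cancellation identity collapsing $M_{\ell\backslash k}$; once that is verified, all six bounds follow from Lemma~\ref{lem:0} and routine norm estimates.
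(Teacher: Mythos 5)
Your proposal is correct and follows essentially the same route as the paper: factor $AX$ through $B=\bigl[\begin{smallmatrix}I_\ell\\\Lambda_\ell^{\perp}X_3\Lambda_\ell^{-1}\end{smallmatrix}\bigr]$, orthonormalize $B_k$ then $B_{\ell\backslash k}$ against $Q_k$ by block Gram--Schmidt, and read off the six blocks, with Lemma~\ref{lem:0} supplying the bounds on $E_{1,1}$ and $E_{2,2}$. The only cosmetic difference is that you bound $\lVert F_1\rVert_2$ and $\lVert F_2\rVert_2$ by noting $\lVert(I+M)^{-1/2}\rVert_2\le1$ and $\lVert I_{n-\ell}-F_1F_1\herm\rVert_2\le1$ and invoking submultiplicativity, whereas the paper obtains the same bounds from the CS-decomposition identity $\lVert F_1\rVert_2=\lVert M_k\rVert_2^{1/2}(1+\lVert M_k\rVert_2)^{-1/2}$; both are fine.
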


\begin{proof}
Let \(\Lambda=\diag\set{\Lambda_k,
\Lambda_{\ell\backslash k},\Lambda_\ell^{\perp}}\).
We express \(AX\) as
\[
AX=A[U_\ell,U_\ell^{\perp}]\bmat{X_1 \\ X_2}
=[U_\ell,U_\ell^{\perp}]\Lambda\bmat{I_\ell \\ X_2X_1^{-1}}X_1
=[U_\ell,U_\ell^{\perp}]B\Lambda_\ell X_1,
\]
where
\begin{equation*}
\label{eq:B}
B=\bmat{I_\ell \\ \Lambda_\ell^{\perp}X_3\Lambda_\ell^{-1}}.
\end{equation*}
Partition \(B\) into \([B_k,B_{\ell\backslash k}]\), where \(B_k\) is \(n\times k\).
Let \(M_k=\Lambda_k^{-1}X_{3,1}\herm(\Lambda_\ell^{\perp})^2X_{3,1}\Lambda_k^{-1}\),
\(E_{1,1}=(B_k\herm B_k)^{-1/2}-I_k\).
Then \(B_k\) can be orthonormalized through
\[
Q_k=B_k(B_k\herm B_k)^{-1/2}
=\bmat{I_k \\ 0 \\ \Lambda_\ell^{\perp}X_{3,1}\Lambda_k^{-1}}
(I_k+E_{1,1})
=\bmat{I_k \\ 0 \\ \Lambda_\ell^{\perp}X_{3,1}\Lambda_k^{-1}}
\bigl(I_k+M_k\bigr)^{-1/2},
\]
where \(\lVert E_{1,1}\rVert_2\) can be bounded by Lemma~\ref{lem:0} as
\[
\lVert E_{1,1}\rVert_2
\leq\frac12\lVert M_k\rVert_2
\leq\frac12\tilde\eta^2.
\]
Let
\[
F_1=\Lambda_\ell^{\perp}X_{3,1}\Lambda_k^{-1}(I_k+E_{1,1}).
\]
Notice that \([I_k+E_{1,1}\herm,0,F_1\herm]\herm\) has orthonormal columns.
By CS decomposition we obtain
\begin{align}
\label{eq:proof_CS}
\begin{aligned}
\lVert F_1\rVert_2
&=\bigl(1-\sigma_{\min}(I_k+E_{1,1})^2\bigr)^{1/2}\\
&=\bigl(1-\sigma_{\min}\bigl((I_k+M_k)^{-1}\bigr)\bigr)^{1/2}\\
&=\bigl(1-\lVert I_k+M_k\rVert_2^{-1}\bigr)^{1/2}\\
&=\lVert M_k\rVert_2^{1/2}(1+\lVert M_k\rVert_2)^{-1/2}\\
&\leq\tilde\eta.
\end{aligned}
\end{align}
We then orthonormalize \(B_{\ell\backslash k}\) against \(Q_k\)
through
\[
C_{\ell\backslash k}
=(I_n-Q_kQ_k\herm)B_{\ell\backslash k}
=\bmat{-(I_k+E_{1,1})F_1\herm
\Lambda_\ell^{\perp}X_{3,2}\Lambda_{\ell\backslash k}^{-1} \\
I_{\ell-k} \\
(I_{n-\ell}-F_1F_1\herm)
\Lambda_\ell^{\perp}X_{3,2}\Lambda_{\ell\backslash k}^{-1}}
\]
and
\[
Q_{\ell\backslash k}
=C_{\ell\backslash k}
(C_{\ell\backslash k}\herm C_{\ell\backslash k})^{-1/2}
=C_{\ell\backslash k}\bigl(I_{\ell-k}
+M_{\ell\backslash k}\bigr)^{-1/2}
=C_{\ell\backslash k}(I_{\ell-k}+E_{2,2}),
\]
where
\[
M_{\ell\backslash k}
=C_{\ell\backslash k}\herm C_{\ell\backslash k}-I_{\ell-k}
=\Lambda_{\ell\backslash k}^{-1}X_{3,2}\herm\Lambda_\ell^{\perp}
(I_{n-\ell}-F_1F_1\herm)
\Lambda_\ell^{\perp}X_{3,2}\Lambda_{\ell\backslash k}^{-1}
\]
and
\(E_{2,2}=(C_{\ell\backslash k}\herm
C_{\ell\backslash k})^{-1/2}-I_{\ell-k}\).
By Lemma~\ref{lem:0} we have
\[
\lVert E_{2,2}\rVert_2
\leq\frac12\lVert M_{\ell\backslash k}\rVert_2
\leq\frac12\lVert I_{n-\ell}-F_1F_1\herm\rVert_2
\lVert\Lambda_\ell^{\perp}X_{3,2}\Lambda_{\ell\backslash k}^{-1}\rVert_2^2
\leq\frac12\hat\eta^2.
\]
Let
\(Q=[Q_k,Q_{\ell\backslash k}]\)
and
\[
Y=[U_k,U_{\ell\backslash k},U_{\ell}^{\perp}]Q
=U_\ell+[U_k,U_{\ell\backslash k},U_{\ell}^{\perp}]
\bmat{E_{1,1} & E_{1,2} \\ 0 & E_{2,2} \\ F_1 & F_2},
\]
where \(E_{1,2}\) and \(F_2\) satisfy
\[
\bmat{E_{1,2} \\ E_{2,2} \\ F_2}
=Q_{\ell\backslash k}-\bmat{0 \\ I_{\ell-k} \\ 0}.
\]
Then \(Y\herm Y=I_\ell\) and \(\Span(Y)=\Span(AX)\).
Notice that \([E_{1,2}\herm,I_{\ell-k}+E_{2,2}\herm,F_2\herm]\herm\) has
orthonormal columns.
Similar to the proof of \eqref{eq:proof_CS}, we obtain
\[
\lVert F_2\rVert_2
\leq\left\lVert\bmat{E_{1,2} \\ F_2}\right\rVert_2
=\lVert M_{\ell\backslash k}\rVert_2^{1/2}
(1+\lVert M_{\ell\backslash k}\rVert_2)^{-1/2}
\leq\hat\eta
\]
and
\[
\lVert E_{1,2}\rVert_2
\leq\lVert I_k+E_{1,1}\rVert_2\lVert F_1\rVert_2
\lVert\Lambda_\ell^{\perp}X_{3,2}\Lambda_{\ell\backslash k}^{-1}\rVert_2
\lVert I_{\ell-k}+E_{2,2}\rVert_2
\leq\lVert F_1\rVert_2
\lVert\Lambda_\ell^{\perp}X_{3,2}\Lambda_{\ell\backslash k}^{-1}\rVert_2
\leq\tilde\eta\hat\eta.
\qedhere
\]
\end{proof}

\begin{theorem}
\label{thm:perturbation}
Let \(H=\Lambda+\Delta H\in\mathbb C^{\ell\times \ell}\) be a Hermitian matrix with
spectral decomposition \(H=Q\Theta Q\herm\), where \(\Lambda\) and \(\Theta\)
are real diagonal matrices, and \(Q\) is unitary.
Partition \(\Lambda\), \(\Theta\) and \(\Delta H\) into
\(\Lambda=\diag\set{\Lambda_k,\Lambda_{\ell\backslash k}}\),
\(\Theta=\diag\set{\Theta_k,\Theta_{\ell\backslash k}}\), and
\(\Delta H=[\Delta H_1,\Delta H_2]\),
where \(\Lambda_k\), \(\Theta_k\in\mathbb{R}^{k\times k}\)
and \(\Delta H_{1}\in\mathbb{C}^{\ell\times k}\).
Suppose
\[
\spec(\Lambda_k)\subset[\alpha,\beta],
\qquad\spec(\Theta_{\ell\backslash k})\subset\mathbb R\backslash(\alpha-\delta,\beta+\delta),
\]
where \(\delta>0\).
Then there exist unitary matrices \(Q_1\in\mathbb C^{k\times k}\)
and \(Q_2\in\mathbb C^{(\ell-k)\times(\ell-k)}\) satisfying
\[
Q=\bmat{Q_1 \\ & Q_2}
+\bmat{\Delta_{1,1} & \Delta_{1,2} \\ \Delta_{2,1} & \Delta_{2,2}},
\qquad \lVert\Delta_{ij}\rVert_2\leq
\begin{cases}
\epsilon_\delta^2, & i=j,\\
\epsilon_\delta, & i\neq j,
\end{cases}
\]
in which \(\epsilon_\delta=\lVert\Delta H_1\rVert_2/\delta\).
\end{theorem}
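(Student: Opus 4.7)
The plan is to combine a Davis--Kahan \(\sin\Theta\) bound with the CS decomposition of the unitary matrix \(Q\).

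First, observe that the columns of \(\tilde X=[I_\ell,0]\herm\) restricted to the first \(k\) columns, i.e.\ \(\tilde X_k:=[I_k,0]\herm\in\mathbb C^{\ell\times k}\), span an invariant subspace of the unperturbed matrix \(\Lambda\) associated with \(\Lambda_k\), so that \(\Lambda\tilde X_k=\tilde X_k\Lambda_k\) by the block-diagonal structure of \(\Lambda\). Hence the residual with respect to \(H=\Lambda+\Delta H\) is exactly
\[
H\tilde X_k-\tilde X_k\Lambda_k=\Delta H\tilde X_k=\Delta H_1.
\]
The hypothesis that \(\spec(\Lambda_k)\subset[\alpha,\beta]\) is separated from \(\spec(\Theta_{\ell\backslash k})\) by at least \(\delta\) is precisely the spectral gap needed by the Davis--Kahan \(\sin\Theta\) theorem. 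Applied to the invariant subspace of \(H\) associated with \(\Theta_{\ell\backslash k}\), which is spanned by \([Q_{1,2}\herm,Q_{2,2}\herm]\herm\), the theorem yields
\[
\lVert Q_{1,2}\rVert_2
=\left\lVert[Q_{1,2}\herm,Q_{2,2}\herm]\herm\cdot\tilde X_k\right\rVert_2
\leq\frac{\lVert\Delta H_1\rVert_2}{\delta}=\epsilon.
\]

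Second, I would invoke the CS decomposition of the \(2\times2\) block unitary \(Q\). This produces unitary matrices \(U_1,V_1\in\mathbb C^{k\times k}\), \(U_2,V_2\in\mathbb C^{(\ell-k)\times(\ell-k)}\) together with nonnegative diagonal \(C\), \(S\) satisfying \(C^2+S^2=I\), such that
\[
Q_{1,1}=U_1CV_1\herm,\quad Q_{1,2}=-U_1SV_2\herm,\quad Q_{2,1}=U_2SV_1\herm,\quad Q_{2,2}=U_2CV_2\herm
\]
(with implicit padding of \(C\) by ones when \(k\ne\ell-k\)). In particular the nonzero singular values of \(Q_{1,2}\) and \(Q_{2,1}\) coincide with the diagonal entries of \(S\), giving the matching bound \(\lVert Q_{2,1}\rVert_2=\lVert Q_{1,2}\rVert_2\leq\epsilon\). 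Setting \(Q_1:=U_1V_1\herm\) and \(Q_2:=U_2V_2\herm\) (both unitary) makes \(Q_{i,i}-Q_i=U_i(C-I)V_i\herm\), and the elementary inequality \(1-\sqrt{1-x^2}\leq x^2\) for \(x\in[0,1]\) (in the same spirit as Lemma~\ref{lem:0}) then yields
\[
\lVert Q_{i,i}-Q_i\rVert_2=\lVert C-I\rVert_2\leq\lVert S\rVert_2^2\leq\epsilon^2.
\]
Defining \(\Delta_{i,j}\) as the corresponding differences completes the argument.

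The step that requires the most care is the Davis--Kahan application: one must form the residual of the \emph{clean} invariant subspace \(\tilde X_k\) against the perturbed operator \(H\) (so that the residual miraculously reduces to exactly \(\Delta H_1\) by the block-diagonal structure of \(\Lambda\)), pair the gap \(\delta\) with the correct spectra \(\spec(\Lambda_k)\) versus \(\spec(\Theta_{\ell\backslash k})\), and correctly identify the canonical-angle sines with singular values of the off-diagonal block \(Q_{1,2}\). Once this step is in place, the CS decomposition and the scalar inequality deliver the quadratic refinement on the diagonal blocks almost automatically. Since \(Q_{1,2}\) is always a submatrix of a unitary, the bound \(\lVert Q_{1,2}\rVert_2\leq1\) holds for free, so the Davis--Kahan estimate is informative only in the small-\(\epsilon\) regime, which is precisely the regime where the claim is of interest.
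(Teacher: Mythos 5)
Your proof is correct and arrives at exactly the same bounds as the paper's, by a closely related but more self-contained route. The Davis--Kahan step is identical in substance: the paper simply invokes the \(\sin\theta\) theorem to get \(\lVert Q_{1,2}\rVert_2\leq\lVert\Delta H_1\rVert_2/\delta\), while you spell out why the relevant residual is exactly \(\Delta H_1\) (there is a small dimension typo in that line: you wrote \([Q_{1,2}\herm,Q_{2,2}\herm]\herm\cdot\tilde X_k\), which does not conform; you meant \([Q_{1,2}\herm,Q_{2,2}\herm]\,\tilde X_k=Q_{1,2}\herm\)). Where the two proofs genuinely diverge is the quadratic bound on the diagonal blocks. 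The paper cites Higham's nearest-unitary lemma to get \(\lVert Q_{1,1}-Q_1\rVert_2\leq\lVert I_k-Q_{1,1}\herm Q_{1,1}\rVert_2=\lVert Q_{2,1}\herm Q_{2,1}\rVert_2\leq\epsilon^2\); you instead take the CS decomposition of the \(2\times2\) block unitary \(Q\), set \(Q_1=U_1V_1\herm\), \(Q_2=U_2V_2\herm\), and reduce the estimate to the scalar inequality \(1-\sqrt{1-s^2}\leq s^2\). These are two phrasings of the same fact, since the singular values of \(Q_{1,1}\) are precisely the CS cosines, the nearest unitary in the spectral norm is the unitary polar factor \(U_1V_1\herm\), and \(\lVert I_k-Q_{1,1}\herm Q_{1,1}\rVert_2=\lVert S\rVert_2^2\). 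The CS route is more constructive and avoids an external reference; citing Higham's lemma is shorter and dispenses with the explicit block decomposition. Either way the off-diagonal linear bound and the diagonal quadratic refinement follow, and the constants agree.
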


\begin{proof}
Partition \(Q\) into
\[
Q=\bmat{Q_{1,1} & Q_{1,2} \\ Q_{2,1} & Q_{2,2}},
\]
where \(Q_{1,1}\in\mathbb C^{k\times k}\).
According to the Davis--Kahan \(\sin\theta\) theorem~\cite{DK1970}, it can be
verified that
\[
\lVert Q_{1,2}\rVert_2\leq\frac{\lVert\Delta H_{1}\rVert_2}{\delta}=\epsilon_\delta.
\]
Notice that \(\lVert Q_{1,2}\rVert_2=\lVert Q_{2,1}\rVert_2\).
Using~\cite[Lemma~5.1]{Higham1994}, we know that there is a unitary matrix
\(Q_1\in\mathbb{C}^{k\times k}\) satisfying
\[
\lVert Q_{1,1}-Q_1\rVert_2
\leq\lVert I_k-Q_{1,1}\herm Q_{1,1}\rVert_2
=\lVert Q_{2,1}\herm Q_{2,1}\rVert_2
\leq\epsilon_\delta^2.
\]
Similarly, there exists a unitary matrix
\(Q_2\in\mathbb{C}^{(\ell-k)\times(\ell-k)}\) such that
\[
\lVert Q_{2,2}-Q_2\rVert_2
\leq\lVert I_{\ell-k}-Q_{2,2}\herm Q_{2,2}\rVert_2
=\lVert Q_{1,2}\herm Q_{1,2}\rVert_2
\leq\epsilon_\delta^2.
\]
Setting
\[
\bmat{\Delta_{1,1} & \Delta_{1,2} \\ \Delta_{2,1} & \Delta_{2,2}}
=Q-\bmat{Q_1 \\ & Q_2}
\]
yields the conclusion.
\end{proof}

\begin{theorem}
\label{thm:rate}
Let \(A\) be Hermitian with spectral decomposition \(A=U\Lambda U\herm\).
Suppose
\[
X=U\bmat{X_k \\ X_{\ell\backslash k} \\ X_\ell^{\perp}}
=[U_k,U_{\ell\backslash k},U_\ell^{\perp}]
\bmat{X_k \\ X_{\ell\backslash k} \\ X_\ell^{\perp}}
\in\mathbb C^{n\times k},
\]
where \(X_k\in\mathbb C^{k\times k}\) is nonsingular,
\(X_{\ell\backslash k}\in\mathbb C^{(\ell-k)\times k}\).
Then
\begin{equation}
\label{eq:rate}
\tan\angle(U_k,AX)
\leq\frac{\lvert\lambda_{\ell+1}\rvert}{\lvert\lambda_k\rvert}
\tan\angle(U_k,X)
+\frac{\bigl\lVert\Lambda_{\ell\backslash k}X_{\ell\backslash k}
X_k^{-1}\bigr\rVert_2}{\lvert\lambda_k\rvert}.
\end{equation}
\end{theorem}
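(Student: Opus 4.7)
The plan is to work entirely in the eigenbasis of $A$ and reduce the claim to a short sequence of norm manipulations. First, using $A=U\Lambda U\herm$ and the given block expression of $X$, I would write
\[
AX=U\bmat{\Lambda_k X_k \\ \Lambda_{\ell\backslash k}X_{\ell\backslash k}
\\ \Lambda_\ell^\perp X_t^\perp}.
\]
The bound is vacuous unless $\tan\angle(U_k,X)<\infty$; together with $X$ having full column rank this forces $X_k$ to be nonsingular, and since $\lambda_k\neq0$ so is $\Lambda_k X_k$.

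Next, I would invoke the standard principal-angle identity: for any $n\times k$ matrix of the form $Z=U_k Z_k+V Z_\perp$ with $U_k\herm V=0$, $V\herm V=I$, and $Z_k$ nonsingular, one has $\tan\angle(U_k,Z)=\lVert Z_\perp Z_k^{-1}\rVert_2$. This follows by writing $ZZ_k^{-1}=U_k+VZ_\perp Z_k^{-1}$, taking the Cholesky-type factor $(I+(Z_\perp Z_k^{-1})\herm (Z_\perp Z_k^{-1}))^{1/2}$ to orthonormalize the columns, and reading off the cosines of the principal angles; the derivation is essentially the same CS-decomposition argument already used in the proof of Theorem~\ref{thm:1}. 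Applying this identity with $V=[U_{\ell\backslash k},U_\ell^\perp]$ to both $X$ and $AX$ yields
\[
\tan\angle(U_k,X)=\left\lVert\bmat{X_{\ell\backslash k}X_k^{-1} \\ X_t^\perp X_k^{-1}}\right\rVert_2,\qquad
\tan\angle(U_k,AX)=\left\lVert\bmat{\Lambda_{\ell\backslash k}X_{\ell\backslash k}X_k^{-1}\Lambda_k^{-1} \\ \Lambda_\ell^\perp X_t^\perp X_k^{-1}\Lambda_k^{-1}}\right\rVert_2.
\]

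Finally, I would split the stacked matrix via the elementary inequality $\lVert[A\herm,B\herm]\herm\rVert_2\leq\lVert A\rVert_2+\lVert B\rVert_2$ and use submultiplicativity on each block. The first block contributes $\lVert\Lambda_{\ell\backslash k}X_{\ell\backslash k}X_k^{-1}\rVert_2\cdot\lVert\Lambda_k^{-1}\rVert_2=\lVert\Lambda_{\ell\backslash k}X_{\ell\backslash k}X_k^{-1}\rVert_2/\lvert\lambda_k\rvert$. For the second block, pulling out $\Lambda_\ell^\perp$ on the left and $\Lambda_k^{-1}$ on the right produces the prefactor $\lvert\lambda_{\ell+1}\rvert/\lvert\lambda_k\rvert$ times $\lVert X_t^\perp X_k^{-1}\rVert_2$, and this last quantity is bounded by $\tan\angle(U_k,X)$ since it is a sub-block of the vertically stacked matrix whose norm realizes that tangent. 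Adding the two estimates gives precisely~\eqref{eq:rate}. The only non-routine step is the tangent identity; once that is available everything else is a one-line norm calculation.
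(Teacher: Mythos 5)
Your proposal is correct and follows essentially the same route as the paper: expand $X$ and $AX$ in the eigenbasis, apply the tangent identity $\tan\angle(U_k,Z)=\lVert Z_\perp Z_k^{-1}\rVert_2$ to both, and split the resulting stacked norm with $\lVert[C\herm,D\herm]\herm\rVert_2\leq\lVert C\rVert_2+\lVert D\rVert_2$ together with submultiplicativity. The only cosmetic difference is that the paper routes through the intermediate bound $\bigl(\lVert C\rVert_2^2+\lVert D\rVert_2^2\bigr)^{1/2}$ before using $\sqrt{a^2+b^2}\le a+b$, whereas you invoke the additive inequality directly; your remark that full column rank plus finiteness of $\tan\angle(U_k,X)$ forces $X_k$ to be invertible is a harmless clarification the paper leaves implicit.
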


\begin{proof}
We have
\[
\tan\angle(U_k,X)
=\left\lVert\bmat{X_{\ell\backslash k}X_k^{-1} \\
X_\ell^{\perp}X_k^{-1}}\right\rVert_2
=\left\lVert\bmat{X_{\ell\backslash k} \\
X_\ell^{\perp}}X_k^{-1}\right\rVert_2
\]
because
\[
X=U\bmat{X_k \\ X_{\ell\backslash k} \\ X_\ell^{\perp}}
=U\bmat{I_k \\
X_{\ell\backslash k}X_k^{-1} \\
X_\ell^{\perp}X_k^{-1}}X_k.
\]
Applying \(A\) to \(X\) yields
\[
AX=U\Lambda\bmat{X_k \\ X_{\ell\backslash k} \\ X_\ell^{\perp}}
=U\bmat{I_k \\
\Lambda_{\ell\backslash k}X_{\ell\backslash k}
X_k^{-1}\Lambda_k^{-1} \\
\Lambda_\ell^{\perp}X_\ell^{\perp}X_k^{-1}\Lambda_k^{-1}}
\Lambda_kX_k
\]
and
\[
\tan\angle(U_k,AX)
=\left\lVert\bmat{\Lambda_{\ell\backslash k}X_{\ell\backslash k}
X_k^{-1}\Lambda_k^{-1} \\
\Lambda_\ell^{\perp}X_\ell^{\perp}X_k^{-1}\Lambda_k^{-1}}
\right\rVert_2
\leq\frac{1}{\lvert\lambda_k\rvert}
\left\lVert\bmat{\Lambda_{\ell\backslash k}X_{\ell\backslash k} \\
\Lambda_\ell^{\perp}X_\ell^{\perp}}X_k^{-1}\right\rVert_2.
\]
Notice that
\begin{align*}
\left\lVert\bmat{\Lambda_{\ell\backslash k}X_{\ell\backslash k} \\
\Lambda_\ell^{\perp}X_\ell^{\perp}}
X_k^{-1}\right\rVert_2
&\leq\left(\bigl\lVert\Lambda_{\ell\backslash k}X_{\ell\backslash k}
X_k^{-1}\bigr\rVert_2^2
+\bigl\lVert\Lambda_\ell^{\perp}X_\ell^{\perp}
X_k^{-1}\bigr\rVert_2^2\right)^{1/2} \\
&\leq\left(\bigl\lVert\Lambda_{\ell\backslash k}X_{\ell\backslash k}
X_k^{-1}\bigr\rVert_2^2
+\lambda_{\ell+1}^2\left\lVert\bmat{X_{\ell\backslash k} \\ X_\ell^{\perp}}
X_k^{-1}\right\rVert_2^2\right)^{1/2} \\
&\leq\bigl\lVert\Lambda_{\ell\backslash k}X_{\ell\backslash k}
X_k^{-1}\bigr\rVert_2
+\lvert\lambda_{\ell+1}\rvert
\left\lVert\bmat{X_{\ell\backslash k} \\ X_\ell^{\perp}}
X_k^{-1}\right\rVert_2.
\end{align*}
Therefore, we have
\[
\tan\angle(U_k,AX)
\leq\frac{\lvert\lambda_{\ell+1}\rvert}{\lvert\lambda_k\rvert}
\tan\angle(U_k,X)
+\frac{\bigl\lVert\Lambda_{\ell\backslash k}X_{\ell\backslash k}
X_k^{-1}\bigr\rVert_2}{\lvert\lambda_k\rvert}.
\qedhere
\]
\end{proof}

We aim at computing \(k\) leading eigenvalues of \(A\).
First, we act \(A\) on the initial matrix \(X\in\mathbb{C}^{n\times\ell}\),
where \(\ell>k\).
According to Theorem~\ref{thm:1}, we
know that there exists
\(Y=U_\ell(I+E)+U_\ell^{\perp}F\in\mathbb C^{n\times \ell}\) such that
\(Y\herm Y=I_\ell\) and \(\Span(Y)=\Span(AX)\),
where
\[
E=\bmat{E_{1,1} & E_{1,2} \\ E_{2,1} & E_{2,2}},
\quad F=\bmat{F_1 & F_2}.
\]
The norm of each block of \(E\) and \(F\) can also be estimated by
Theorem~\ref{thm:1}.

The projected matrix \(Y\herm AY\) can be written as
\[
Y\herm AY=\Lambda_\ell+E\herm\Lambda_\ell+\Lambda_\ell E
+\bmat{E \\ F}\herm\Lambda\bmat{E \\ F}
=\Lambda_\ell+\Delta H,
\]
i.e., \(Y\herm AY\) can be regarded as a diagonal matrix \(\Lambda_\ell\) with
a Hermitian perturbation \(\Delta H\).
The leading \(k\) columns of \(\Delta H\), denoted as \(\Delta H_1\),
is bounded through
\begin{align*}
\lVert\Delta H_1\rVert_2
&=\left\lVert
\bmat{E_{1,1}\herm\Lambda_k \\ E_{1,2}\herm\Lambda_k}
+\bmat{\Lambda_k E_{1,1} \\ \Lambda_{\ell\backslash k}E_{2,1}}
+\bmat{E_{1,1}\herm\Lambda_kE_{1,1}
+E_{2,1}\herm\Lambda_{\ell\backslash k}E_{2,1}
+F_1\herm\Lambda_\ell^{\perp}F_1 \\
E_{1,2}\herm\Lambda_kE_{1,1}
+E_{2,2}\herm\Lambda_{\ell\backslash k}E_{2,1}
+F_2\herm \Lambda_\ell^{\perp}F_1}
\right\rVert_2\\
&\leq\lvert\lambda_1\rvert(2\lVert E_{1,1}\rVert_2
+\lVert E_{1,2}\rVert_2+\lVert E_{1,1}\rVert_2^2
+\lVert E_{1,1}\rVert_2\lVert E_{1,2}\rVert_2)
+\lvert\lambda_{\ell+1}\rvert(\lVert F_1\rVert_2^2
+\lVert F_1\rVert_2\lVert F_2\rVert_2)\\
&=O\bigl(\lvert\lambda_1\rvert\cdot\lVert E_{1,2}\rVert_2
+\lvert\lambda_{\ell+1}\rvert\cdot\lVert F_1\rVert_2\lVert F_2\rVert_2\bigr).
\end{align*}
Suppose that the conditions of Theorem~\ref{thm:perturbation} hold, and the
upper bound of value \(\epsilon_\delta\) is
\[
\epsilon_\delta=\frac{\lVert\Delta H_1\rVert_{2}}{\delta}\leq
\frac{\lvert\lambda_1\rvert}{\delta}\cdot O(\hat\eta\tilde\eta).
\]
After the Rayleigh--Ritz projection \(Y\herm AY=Q\Theta Q\herm\), the
approximate eigenvectors \(X\) become
\begin{equation*}
\label{RR:X}
X=YQ=U_\ell(I_\ell+E)Q+U_\ell^{\perp}FQ.
\end{equation*}
Then we have
\[
(I_\ell+E)Q=\bmat{(I_k+E_{1,1})(Q_1+\Delta_{1,1})+E_{1,2}\Delta_{2,1} & * \\
E_{2,1}(Q_{1}+\Delta_{1,1})+(I_{\ell-k}+E_{2,2})\Delta_{2,1} & *},
\]
and
\[
FQ=\bmat{F_1(Q_1+\Delta_{1,1})+F_2\Delta_{2,1} & * },
\]
where
\begin{equation}
\label{eq:X1}
\lVert(I_k+E_{1,1})(Q_1+\Delta_{1,1})+E_{1,2}\Delta_{2,1}-Q_1\rVert_2
\leq\lVert E_{1,1}\rVert_2+\lVert E_{1,2}\rVert_2\lVert \Delta_{2,1}\rVert_2
=O(\tilde\eta^2),
\end{equation}
\begin{equation}
\label{eq:X2}
\begin{aligned}
\lVert E_{2,1}(Q_{1}+\Delta_{1,1})+(I_{\ell-k}+E_{2,2})\Delta_{2,1}\rVert_2
=\lVert(I_{\ell-k}+E_{2,2})\Delta_{2,1}\rVert_2
\leq\lVert\Delta_{2,1}\rVert_2
=O\biggl(\frac{\lvert\lambda_1\rvert}{\delta}\hat\eta\tilde\eta\biggr),
\end{aligned}
\end{equation}
and
\begin{align}
\begin{aligned}
\label{eq:X3}
\lVert F_1(Q_1+\Delta_{1,1})+F_2\Delta_{2,1}\rVert_2
\leq\lVert F_1\rVert_2+\lVert F_2\rVert_2\lVert\Delta_{21}\rVert_2
=O(\tilde\eta).
\end{aligned}
\end{align}
Since the diagonal entries of \(\Lambda_{\ell\backslash k}\) decay,
\(\hat\eta\) is usually much smaller than the most pessimistic upper bound
\(\lvert\lambda_{\ell+1}\rvert/\lvert\lambda_\ell\rvert\cdot\lVert X_{32}\rVert_2\).
Then~\eqref{eq:X3} is typically larger than~\eqref{eq:X2} when
\(\lvert\lambda_1\rvert/\delta=O(1)\).
We remark that \(\lvert\lambda_1\rvert/\delta=O(1)\) is not unusual in our
original setting of contour integral-based solvers, as the gap \(\delta\)
depends on the distance from the contour to the closest unwanted eigenvalue.

Next, we update \(X\) by keeping its leading \(k\) columns and dropping the
trailing ones.
Then
\[
X=[U_k,U_{\ell\backslash k},U_\ell^{\perp}]\bmat{X_k \\ X_{\ell\backslash k} \\ X_\ell^{\perp}},
\]
where \(X_{\ell\backslash k}\) and \(X_\ell^{\perp}\) are bounded
by~\eqref{eq:X2} and~\eqref{eq:X3}, respectively.
By~\eqref{eq:rate} in Theorem~\ref{thm:rate}, we have
\begin{equation}
\label{eq:rate3}
\frac{\tan\angle(U_k,AX)}{\tan\angle(U_k,X)}
\leq\frac{\lvert\lambda_{\ell+1}\rvert}{\lvert\lambda_k\rvert}
+\kappa(X_k)
\frac{\lVert\Lambda_{\ell\backslash k}X_{\ell\backslash k}\rVert_2}
{\lvert\lambda_k\rvert\lVert X_\ell^{\perp}\rVert_2}.
\end{equation}
We conclude that \(\kappa(X_k)=\Theta(1)\) according to~\eqref{eq:X1}, and
\(\lVert X_\ell^{\perp}\rVert_2\) is in general larger than
\(\lVert X_{\ell\backslash k}\rVert_2\).
In practice, \(\lvert\lambda_k\rvert\lVert X_\ell^{\perp}\rVert_2\) is even
much larger than
\(\lVert\Lambda_{\ell\backslash k}X_{\ell\backslash k}\rVert_2\),
because the diagonal entries of \(\Lambda_{\ell\backslash k}\) decay rapidly.
In fact, from numerical experiments we observe that
\(\lvert\lambda_{\ell+1}\rvert/\lvert\lambda_k\rvert\) dominates the
right-hand-side of~\eqref{eq:rate3}.
Therefore, the local convergence rate is roughly equal to
\(\lvert\lambda_{\ell+1}\rvert/\lvert\lambda_k\rvert=O(\epsilon)\).
After a few iterations, the local convergence rate gradually deteriorates,
and eventually returns to the asymptotic convergence rate
\(\lvert\lambda_{k+1}\rvert/\lvert\lambda_k\rvert\).

\section*{Acknowledgments}
The authors thank Zhaojun Bai, Weiguo Gao, Zhongxiao Jia, Daniel Kressner,
Yingzhou Li, and Jose E. Roman for helpful discussions.
This work is partially supported by the National Natural Science Foundation
of China under grant No.~92370105.

\addcontentsline{toc}{section}{References}

\end{document}